\newtheorem{theorem}{\sc Theorem}[section]
\newtheorem{propn}[theorem]{\sc Proposition}
\newtheorem{definition}[theorem]{\sc Definition}
\newtheorem{cor}[theorem]{\sc Corollary}
\newcommand{\argmin}{{\rm arg\,min}}
\newcommand{\cB}{{\mathcal B}}
\newcommand{\cC}{{\mathcal C}}
\newcommand{\cD}{{\mathcal D}}
\newcommand{\cH}{{\mathcal H}}
\newcommand{\cI}{{\mathcal I}}
\newcommand{\cK}{{\mathcal K}}
\newcommand{\cL}{{\mathcal L}}
\newcommand{\cO}{{\mathcal O}}
\newcommand{\cP}{{\mathcal P}}
\newcommand{\cR}{{\mathcal R}}
\newcommand{\cT}{{\mathcal T}}
\newcommand{\cU}{{\mathcal U}}
\newcommand{\cW}{{\mathcal W}}
\newcommand{\tx}{\tilde{x}}
\newcommand{\ty}{\tilde{y}}
\newcommand{\tz}{\tilde{z}}
\newcommand{\bx}{{\bf x}}
\newcommand{\R}{\mathbb{R}}
\newcommand{\norm}[1]{|| #1||}
\newcommand{\myarr}[1]{\left( \begin{array}{cccccccccccccccccc}#1\end{array}\right)}
\newcommand{\ba}{\begin{array}}
\newcommand{\ea}{\end{array}}
\newcommand{\be}{\begin{equation}}
\newcommand{\ee}{\end{equation}}
\newcommand{\bea}{\begin{eqnarray}}
\newcommand{\eea}{\end{eqnarray}}
\newcommand{\beq}{\begin{equation}}
\newcommand{\eeq}{\end{equation}}
\newcommand{\bqt}{\begin{quote}}
\newcommand{\eqt}{\end{quote}}
\begin{document}

%
\title[Regularity Properties of the Smoothed-TV]
{Smoothed-TV Regularization for H\"{o}lder Continuous Functions}

%
\author{Erdem Altuntac}

\address{Institute for Numerical and Applied Mathematics,
University of G\"{o}ttingen, Lotzestr. 16-18,
D-37083, G\"{o}ttingen, Germany}


\ead{\mailto{e.altuntac@math.uni-goettingen.de}}

\begin{abstract}

This work aims to explore the regularity properties of 
the smoothed-TV regularization for the functions is of
the class H\"{o}lder continuous. 
Over some compact and convex domain $\Omega,$
we study construction of multivariate function
$\varphi(\bx) :\Omega \subset \R^{3} \rightarrow \R_{+}$
as the optimized solution to the following convex 
minimization problem

\bea
\argmin_{\Omega} \left\{
F_{\alpha}(\cdot , f^{\delta}) := \frac{1}{2} \norm{\cT( \cdot ) - f^{\delta}}_{\cH}^2 + \alpha J(\cdot) \right\} ,
\nonumber
\eea
where the penalizer 
$J(\cdot) : \cC^{1}(\Omega,\R^{3})\rightarrow \R_{+}$ is 
the smoothed total variation penalizer

\bea
J(\cdot) = \int_{\Omega} \sqrt{\norm{\nabla(\cdot)}_2^2 + \beta} d \bx ,
\nonumber
\eea

\noindent for a fixed $0 < \beta < 1.$ We assume our target function to be H\"{o}lder
continuous. With this assumption, we establish relation between total 
variation of our target function and its H\"{o}lder coefficient.
We prove that the smoothed-TV regularization is an admissible
regularization strategy by evaluating the discrepancy 
$\norm{\cT\varphi_{\alpha} - f^{\delta}} \leq \tau\delta$
for some fixed $\tau \geq 1.$
To do so, we need to assume that the target function
to be class of $\cC^{1+}(\Omega).$ From here, under the fact that the penalty 
$J(\cdot)$ is strongly convex, we move on
to showing the convergence of $\norm{\varphi_{\alpha} - \varphi^{\dagger}},$
for $\varphi_{\alpha}$ is the optimum and $\varphi^{\dagger}$
is the true solution for the given minimization problem above. 
We demonstrate that strong convexity
and $2-$convexity are actually different names for the same
concept. In addition to these facts,
we make us of Bregman divergence in order to be able to quantify
the rate of convergence.

\bigskip
\textbf{Keywords.}
{H\"{o}lder continuity, Bounded variation, smoothed total variation, Morozov discrepancy.}
\end{abstract}

\bigskip


\section{Introduction}

As alternative to well established Tikhonov regularization,
\cite{Tikhonov63, TikhonovArsenin77},
studying convex variational regularization with any penalizer $J(\cdot)$
has become important over the last decade. Introducing a new image 
denoising method named as {\em total variation}, \cite{RudinOsherFatemi92},
is commencement of this study.
Application and analysis of the method have been widely carried
out in the communities of inverse problems and optimization, 
\cite{AcarVogel94, BachmayrBurger09, BardsleyLuttman09, ChambolleLions97,
ChanChen06, ChanGolubMulet99, DobsonScherzer96, 
DobsonVogel97, VogelOman96}. Particularly, formulating
the minimization problem as variational problem and
estimating convergence rates with variational source conditions
has also become popular recently, \cite{BurgerOsher04, Grasmair10, 
Grasmair13, GrasmairHaltmeierScherzer11, Lorenz08}.
Unlike in the available literature, 
we define discrepancy principle for the smoothed-TV regularization
under a particular rule for the choice of regularization parameter.
Furthermore, still with the same regularization parameter,
we manage to show that smoothed-TV regularization is an 
admissible regularization strategy with H\"{o}lder continuity.

We are tasked with constructing the regularized solution
$\varphi_{\alpha}$ over some compact and convex domain 
$\Omega \subset \cH,$ for the following variational minimization 
problem,

\beq
\label{problem}
\varphi_{\alpha(\delta)} \in \argmin_{\varphi \in \cH} 
\left\{ F_{\alpha}(\varphi , f^{\delta}) := 
\frac{1}{2} \norm{\cT \varphi - f^{\delta}}_{\cH}^2 + \alpha J(\varphi) \right\} ,
\eeq
for the penalty term $J(\varphi) : \cC^{2}(\Omega , \cH) \rightarrow \R_{+}$ defined by

\bea
\label{smoothed_tv_regul}
J(\varphi) = \int_{\Omega} \sqrt{\norm{\nabla \varphi}_2^2 + \beta} d \bx ,
\eea
and $\alpha > 0$ is the regularization parameter.
It is expected that the perturbed given data is 
$f^{\delta} \notin \cR(\cT)$ lies in in some 
$\delta-$ball $\cB_{\delta}(f^{\dagger})$
centered at the true data $f^{\dagger},$
{\em i.e.} $\norm{f^{\dagger} - f^{\delta}} \leq \delta.$
The compact forward operator $\cT : \Omega \subset \cH \rightarrow \cH$
is assumed to be linear and injective.
It is well known by the theory of inverse problems 
that a regularization strategy is admissible
if the regularization parameter satisfies,

\beq
\label{discrepancy_pr_definition}
\alpha(\delta , f^{\delta}) = \sup \{ \alpha > 0 \mbox{ }\vert \mbox{ }\norm{\cT\varphi_{\alpha(\delta)} - f^{\delta}}\leq \tau\delta \} ,
\eeq
where $\tau \geq 1,$ \cite[Eq. (4.57) and (4.58)]{Engl96},
\cite[Definition 2.3]{Kirsch11}. The regularized solution 
$\varphi_{\alpha(\delta)}$ of the problem (\ref{problem}) 
must satisfy the following first order optimality conditions,

\bea
\label{optimality_1}
& 0 & = \nabla F_{\alpha}(\varphi_{\alpha(\delta)})
\nonumber\\
& 0 & = \cT^{\ast}(\cT\varphi_{\alpha(\delta)} - f^{\delta}) + \alpha(\delta) \nabla J(\varphi_{\alpha(\delta)})
\nonumber\\
& \cT^{\ast}(f^{\delta} - \cT\varphi_{\alpha(\delta)}) & = \alpha(\delta) \nabla  J(\varphi_{\alpha(\delta)}) .
\eea

This work aims to answer two fundamental questions 
in the field of regularization theory; Is it possible to quantify $\tau$ in 
(\ref{discrepancy_pr_definition}) when the penalizer is
(\ref{smoothed_tv_regul})? What is the rule for the choice
of regularization parameter $\alpha(\delta , f^{\delta})$
when the penalizer is (\ref{smoothed_tv_regul}) that the
smoothed-TV is also an admissible regularization theory?
We will be able to quantify the rate of the convergence of 
$\norm{\varphi_{\alpha(\delta)} - \varphi^{\dagger}}$
by means of the Bregman divergence.

Existence of the solution to the TV minimization problem, {\em i.e.} 
$J(\cdot) = \int_{\Omega} \norm{\nabla(\cdot)}_2 dx$ 
in the problem (\ref{problem}), has been discussed extensively 
\cite{Hintermuller14, Setzer11}. Moreover, an existence and uniquness 
theorem for the minimizer of quadratic functionals 
with different type of convex integrands has been established 
in \cite[Theorem 9.5-2]{Ciarlet13}. As has been given by 
the {\em Minimal Hypersurfaces} problem in \cite{Ekeland74}, the minimizer
of the problem (\ref{problem}), for the smoothed-TV penalty
$J(\cdot) = \int_{\Omega} \sqrt{\norm{\nabla(\cdot)}_2^2 + \beta} dx,$
exists on a reflexive Banach space.

\section{Notations and Prerequisite Knowledge}
\label{notations}

\subsection{Vector calculus notations}
\label{vector_notations}

We assume to be tasked with reconstruction of some non-negative scalar
function defined on a compact subset $\Omega$ of 
$\R^{3},$ {\em i.e.} $\varphi(\bx) :\Omega \subset \R^{3} \rightarrow \R_{+}$
where the spatial coordinate is $\bx = (x, y, z).$  
Then the gradient of $\varphi$ is regarded as a vector with components

\begin{displaymath}
\nabla \varphi = \myarr{\frac{\partial\varphi}{\partial x}, & \frac{\partial\varphi}{\partial y}, & \frac{\partial\varphi}{\partial z}}^{T} .
\end{displaymath}
The magnitude of this gradient in the Euclidean sense,

\beq
\label{magnitude_of_the_grad}
\norm{\nabla\varphi}_2 = \left( \left\vert\frac{\partial\varphi}{\partial x}\right\vert^2 + 
\left\vert\frac{\partial\varphi}{\partial y}\right\vert^2 + 
\left\vert\frac{\partial\varphi}{\partial z}\right\vert^2 \right)^{1/2} .
\eeq

\subsection{Functional analysis notations}
\label{functional_notations}

We aim to approximate a function 
which belongs to H\"{o}lder space. H\"{o}lder space is denoted 
by $\cC^{0,\gamma}(\Omega)$ where $0 < \gamma \leq 1,$ \cite[Subsection 5.1]{Evans98}.
If a multivariate function $\varphi(\bx) \in \cC^{0,\gamma}(\Omega),$
then there exists $\kappa > 0$ such that the function $\varphi(\bx)$ 
satisfies the following H\"{o}lder continuity
\beq
\label{holder_cont}
\vert \varphi(\bx) - \varphi(\tilde{\bx}) \vert \leq \kappa \norm{ \bx - \tilde{\bx} }_2^{\gamma}, \mbox{ } \forall \bx, \tilde{\bx} \in \Omega .
\eeq
Here $\vert \cdot \vert$ is the absolute value of 
$\varphi(\bx) :\Omega \subset \R^{3} \rightarrow \R_{+}.$
H\"{o}lder space is a Banach space endowed with the norm

\beq
\label{holder_norm}
\norm{\varphi}_{\gamma} := \norm{\varphi}_{\infty} + [\varphi]_{\cC^{0,\gamma}} ,
\eeq
where the H\"{o}lder coefficient $[\varphi]_{\cC^{0,\gamma}(\Omega)}$ is defined by

\beq
\label{holder_coeff}
[\varphi]_{\cC^{0,\gamma}(\Omega)} := \sup_{\bx , \tilde{\bx} \in \Omega \subset \R^{3}} 
\frac{\vert \varphi(\bx) - \varphi(\tilde{\bx}) \vert}
{\norm{ \bx - \tilde{\bx} }_2^{\gamma}} ,
\eeq
and the Euclidean norm is

\beq
\label{holder_euclidean}
\norm{\bx - \tilde{\bx}}_2^{\gamma} := \left( (x - \tx_{o})^2 + (y - \ty_{o})^2 + (z - \tz_{o})^2 \right)^{\gamma/2}.
\eeq
So that, we define H\"{o}lder space by

\begin{displaymath}
\cC^{0 , \gamma}(\Omega) := \{ \varphi \in \cC(\Omega) : \norm{\varphi}_{\gamma} < \infty \} .
\end{displaymath}

In this work, we focus on total variation (TV) of a
function, \cite{ChambolleLions97, RudinOsherFatemi92}. 
With (\ref{magnitude_of_the_grad}), 
TV of our multivariate function is explicitly,

\bea
\label{tv_integral_form}
TV(\varphi) := \int_{\Omega} \norm{\nabla\varphi}_2 dx =
\int_{\Omega_{x}}\int_{\Omega_{y}}\int_{\Omega_{z}} 
\left( \left(\frac{\partial\varphi}{\partial x}\right)^2 + 
\left(\frac{\partial\varphi}{\partial y}\right)^2 + 
\left(\frac{\partial\varphi}{\partial z}\right)^2 \right)^{1/2} dx dy dz.
\nonumber
\eea
Total variation type regularization targets 
the reconstruction of bounded variation (BV) class of functions, 
\cite{Vogel02},

\beq
\label{bv_def}
\norm{\varphi}_{BV} := \norm{\varphi}_{\cL^1} + TV(\varphi).
\eeq


\subsection{Bregman divergence}
\label{bregman_divergence_def}

Following formulation emphasizes the functionality of the Bregman
divergence in proving the norm convergence of the minimizer
of the convex minimization problem to the true solution.

\begin{definition}[Total convexity and Bregman divergence]\cite[Definition 1]{Bredies09}
\label{total_convexity}

Let $\Phi : \cH \rightarrow \R \cup \{ \infty \}$ be a smooth and convex functional. 
Then $\Phi$ is called totally convex in $u^{\ast} \in \cH,$ 
if, for $\nabla \Phi(u^{\ast})$ and $\{ u \},$ it holds that

\bea
D_{\Phi}(u , u^{\ast}) =
\Phi(u) - \Phi(u^{\ast}) - \langle \nabla \Phi(u^{\ast}) , u - u^{\ast} \rangle \rightarrow 0
\Rightarrow \norm{u - u^{\ast}}_{\cH} \rightarrow 0
\nonumber
\eea
where $D_{\Phi}(u , u^{\ast})$ represents the {\em Bregman divergence}.

It is said that $\Phi$ is {\em q-convex} in $u^{\ast} \in \cH$
with a $q \in [2, \infty ),$ if for all $M > 0$ there exists a $c^{\ast} > 0$ 
such that for all $\norm{u - u^{\ast}}_{\cH} \leq M$ we have

\beq
\label{q_convexity}
D_{\Phi}(u , u^{\ast}) = 
\Phi(u) - \Phi(u^{\ast}) - \langle \nabla \Phi(u^{\ast}), u - u^{\ast} \rangle \geq c^{\ast} \norm{u - u^{\ast}}_{\cH}^q .
\eeq
\end{definition}

Throughout our norm convergence estimations, we refer to this
definition for the case of $2-$convexity.

In fact, another similar estimation 
to (\ref{q_convexity}), for $q = 2,$ can also be derived
by making further assumption about the functional $\Phi$
one of which is strong convexity with modulus $c,$
\cite[Definition 10.5]{BauschkeCombettes11}.
Below is this alternative way of obtaining (\ref{q_convexity})
when $q = 2.$

\begin{propn}
\label{proposition_q-convexity}
Let $\Phi : \cH \rightarrow \R \cup \{ \infty \}$ 
be $\Phi \in \cC^{2}(\cH)$ is strongly convex with 
modulus of convexity $c > 0,$ {\em i.e.} $\nabla^2 \Phi \succ cI,$ then

\bea
\label{lower_bound_for_bregman}
D_{\Phi}(u , v) > c \norm{u - v}^2 + \cO(\norm{u - v}^2) .
\eea
\end{propn}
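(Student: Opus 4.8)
The plan is to read off the estimate from a second-order Taylor expansion of $\Phi$ about the point $v$, using the definition of the Bregman divergence to annihilate the lower-order terms. Since $\Phi \in \cC^2(\cH)$, Taylor's theorem applied along the segment joining $v$ to $u$ gives, for $u$ near $v$,
\bea
\Phi(u) &=& \Phi(v) + \scal{\nabla \Phi(v)}{u - v} \nonumber \\
&& + \frac{1}{2} \scal{\nabla^2 \Phi(v)(u - v)}{u - v} + \cO(\norm{u - v}^2) ,
\nonumber
\eea
where the remainder term collects the deviation of the Hessian along the segment $[v,u]$ from its value at $v$.

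Substituting this expansion into $D_{\Phi}(u,v) = \Phi(u) - \Phi(v) - \scal{\nabla \Phi(v)}{u - v}$, the constant contribution $\Phi(v)$ and the linear contribution $\scal{\nabla \Phi(v)}{u-v}$ cancel exactly, leaving
\bea
D_{\Phi}(u,v) = \frac{1}{2} \scal{\nabla^2 \Phi(v)(u - v)}{u - v} + \cO(\norm{u - v}^2) .
\nonumber
\eea
I would then invoke the strong-convexity hypothesis: the relation $\nabla^2 \Phi \succ cI$ means precisely that $\scal{\nabla^2 \Phi(v) w}{w} > c \norm{w}^2$ for every nonzero $w \in \cH$, so applying it with $w = u - v$ bounds the quadratic form below and delivers the claimed inequality.

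The step I expect to require the most care is not the expansion itself but the bookkeeping of constants and remainders. An exact mean-value form of Taylor's theorem, namely $D_{\Phi}(u,v) = \frac{1}{2}\scal{\nabla^2\Phi(\xi)(u-v)}{u-v}$ for some $\xi$ on the segment $[v,u]$, combined with $\nabla^2\Phi(\xi)\succ cI$, produces a factor $\frac{c}{2}$; matching the constant $c$ appearing in the statement therefore rests on the normalization adopted for the modulus of convexity, and the residual discrepancy is absorbed into the $\cO(\norm{u-v}^2)$ term. I would also record that the bound holds uniformly in the base point $v$, since $\nabla^2\Phi \succ cI$ is assumed at every point of $\cH$; this uniformity is what prevents the estimate from being merely local to a single reference point and so makes it dovetail with the notion of $2$-convexity in Definition \ref{total_convexity}.
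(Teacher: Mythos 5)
Your proposal follows essentially the same route as the paper's own proof: a second-order Taylor expansion of $\Phi$ about $v$, cancellation of the constant and linear terms in the Bregman divergence, and then the Hessian lower bound $\nabla^2\Phi \succ cI$ applied to the remaining quadratic form. You are in fact slightly more careful than the paper, since you flag the factor $\tfrac{1}{2}$ coming from the Taylor remainder (which yields $\tfrac{c}{2}\norm{u-v}^2$ rather than $c\norm{u-v}^2$) --- a normalization issue the paper's proof silently passes over.
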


\begin{proof}

Let us begin with considering the Taylor expansion of $\Phi,$
\beq
\Phi(u) = \Phi(v) + \langle \nabla \Phi(v) , u - v \rangle + 
\frac{1}{2} \langle \nabla^2 \Phi(v)(u - v) , u - v \rangle + \cO(\norm{u - v}^2) .
\eeq
Then the Bregman divergence 

\bea
D_{\Phi}(u , v) & = & 
\Phi(u) - \Phi(v) - \langle \nabla \Phi(v) , u - v \rangle
\nonumber\\
& = & \langle \nabla \Phi(v) , u - v \rangle + 
\frac{1}{2} \langle \nabla^2 \Phi(v)(u - v) , u - v \rangle + \cO(\norm{u - v}^2) 
- \langle \nabla \Phi(v) , u - v \rangle
\nonumber\\
& = & \frac{1}{2} \langle \nabla^2 \Phi(v)(u - v) , 
u - v \rangle + \cO(\norm{u - v}^2) .
\nonumber
\eea
Since $\Phi(\cdot)$ is striclty convex, due to strong convexity 
and $\Phi \in \cC^{2}(\cH),$ hence one obtains that

\bea
D_{\Phi}(u , v) > c \norm{u - v}^2 + \cO(\norm{u - v}^2) ,
\eea
where $c$ is the modulus of convexity.

\end{proof}


\subsection{Further Results on the H\"{o}lder Continuity}
\label{holder_relations}

We already have reviewed in Subsection \ref{functional_notations}
that the H\"{o}lder space $\cC^{0, \gamma}$ is a Banach space 
endowed with the norm, for all $x \neq y \in \Omega$
and $\Omega$ is a compact domain,

\beq
\label{holder_norm}
\norm{\varphi}_{\gamma} := \sup_{\bx \in \Omega} \vert \varphi(\bx) \vert + 
[\varphi]_{\cC^{0,\gamma}(\Omega)}
= \norm{\varphi}_{\infty} + [\varphi]_{\cC^{0,\gamma}(\Omega)} .
\eeq
Here the H\"{o}lder coefficient is obviously bounded by

\beq
\label{holder_coefficient}
[\varphi]_{\cC^{0,\gamma}(\Omega)} :=
\sup_{\bx , \tilde{\bx} \in \Omega} \frac{\vert \varphi(\bx) - \varphi(\tilde{\bx}) \vert}
{\norm{ \bx - \tilde{\bx} }_2^{\gamma}} \leq \kappa .
\eeq
Furthermore, following from (\ref{holder_norm}), 
an immediate conclusion can be formulated as follows.

\begin{propn}
\label{holder_embedded_into_L1}
Over the compact domain $\Omega,$
if $\varphi \in \cC^{0, \gamma}(\Omega), $ 
then $\varphi \in \cL^1(\Omega).$
\end{propn}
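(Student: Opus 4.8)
The plan is to exploit the fact that H\"older continuity on a compact domain forces $\varphi$ to be bounded, and that a bounded measurable function on a set of finite Lebesgue measure is automatically integrable. The whole statement is really the standard chain of embeddings $\cC^{0,\gamma}(\Omega) \hookrightarrow \cC(\Omega) \hookrightarrow \cL^{\infty}(\Omega) \hookrightarrow \cL^{1}(\Omega)$, valid because $\Omega$ is bounded, so the task is simply to assemble these three inclusions.

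First I would establish that $\varphi$ is bounded on $\Omega$. By the very definition of the H\"older norm $\norm{\varphi}_{\gamma} = \norm{\varphi}_{\infty} + [\varphi]_{\cC^{0,\gamma}(\Omega)}$, finiteness of which is required for membership $\varphi \in \cC^{0,\gamma}(\Omega)$, we have in particular $\norm{\varphi}_{\infty} = \sup_{\bx \in \Omega} |\varphi(\bx)| < \infty$. If one prefers an explicit bound rather than invoking finiteness of the norm, I would fix a base point $\bx_0 \in \Omega$ and use the H\"older estimate (\ref{holder_cont}) together with compactness of $\Omega$ (hence finiteness of its diameter) to write, for every $\bx \in \Omega$,
\beq
|\varphi(\bx)| \leq |\varphi(\bx_0)| + \kappa \norm{\bx - \bx_0}_2^{\gamma} \leq |\varphi(\bx_0)| + \kappa \, (\mathrm{diam}\,\Omega)^{\gamma} =: C < \infty .
\eeq

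Next I would note that $\varphi$ is Lebesgue measurable, being continuous (H\"older continuity trivially implies continuity), and that the compact set $\Omega \subset \R^{3}$ has finite Lebesgue measure $|\Omega| < \infty$, since a compact subset of $\R^{3}$ is bounded. Combining the two observations yields the integrability bound directly:
\beq
\norm{\varphi}_{\cL^{1}(\Omega)} = \int_{\Omega} |\varphi(\bx)| \, d\bx \leq \norm{\varphi}_{\infty} \, |\Omega| < \infty ,
\eeq
so that $\varphi \in \cL^{1}(\Omega)$, as claimed.

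There is no genuine obstacle here; the only points that merit a word of justification are the measurability of $\varphi$ (handled by continuity) and the finiteness of $|\Omega|$ (handled by compactness), both of which are immediate. It is worth remarking that the argument uses nothing beyond boundedness of $\varphi$ and finiteness of $|\Omega|$, so the identical reasoning in fact gives $\cC^{0,\gamma}(\Omega) \hookrightarrow \cL^{p}(\Omega)$ for every $p \in [1,\infty]$; this stronger conclusion is the form actually needed to control the $\cL^{1}$ term in the $BV$-norm (\ref{bv_def}) underlying the later estimates.
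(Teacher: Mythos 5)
Your proof is correct and follows essentially the same route as the paper's: both rest on the inequality $\norm{\varphi}_{\cL^1} \leq \norm{\varphi}_{\infty}\,\vert\Omega\vert$ together with the finiteness of $\norm{\varphi}_{\infty}$ guaranteed by the H\"older norm and the finiteness of $\vert\Omega\vert$ from compactness. Your version merely spells out the measurability and the explicit base-point bound, which the paper leaves implicit.
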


\begin{proof}
Since $\norm{\varphi}_{\infty} \geq \frac{1}{\vert \Omega \vert} \norm{\varphi}_{\cL^1}$
and $[\varphi]_{\cC^{0,\gamma}(\Omega)} > 0,$ then

\beq
\norm{\varphi}_{\gamma} \geq \norm{\varphi}_{\infty} \geq \frac{1}{\vert \Omega \vert} \norm{\varphi}_{\cL^1}.
\eeq

\end{proof}


\section{H\"{o}lder Continuity and TV of a $\cC^{1}-$Smooth Function}
\label{holder_tv_spaces}

We now come to the point where we start establishing the relations
between $\gamma-$H\"{o}lder continuity and TV of a function $\varphi$
on $\R^{3}.$ The following theorems will also serve us for determining
an implementable and unique regularization parameter appeared in 
the minimization problem (\ref{problem}).
We emphasize a very important assumption that we always work with 
continuous function on a compact domain which is uniformly
continuous. This fact will allow us to interchange the necessary operations
in order to obtain the desired results in what follows.
%

\begin{theorem}[Morrey's inequality]\cite[Subsection 5.6.2., Theorem 4]{Evans98}
\label{Morrey_ineq}
Let $\Omega \subset \R^{N}$ be the compact domain and let
$N < p \leq \infty.$ Then there exists a constant $C,$
depending only on $p$ and $N,$ such that

\beq
\norm{\varphi}_{\cC^{0,\gamma}(\Omega)} \leq C \norm{\varphi}_{\cW^{1,p}(\Omega)}
\eeq
for all $\varphi \in \cC^{1}(\Omega),$ where

\beq
\gamma := 1 - N/p .
\eeq
\end{theorem}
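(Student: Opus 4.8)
The plan is to reproduce the classical potential-theoretic argument (as in Evans): I bound the H\"older seminorm $[\varphi]_{\cC^{0,\gamma}(\Omega)}$ and the sup-norm $\norm{\varphi}_{\infty}$ separately by $\norm{\varphi}_{\cW^{1,p}(\Omega)}$, with the entire gain of H\"older regularity coming from the strict inequality $p > N$. The technical heart is a Riesz-potential estimate for the oscillation of $\varphi$. For $\varphi \in \cC^{1}(\Omega)$, $\bx \in \Omega$ and a ball $B(\bx,r)$, convexity of $\Omega$ guarantees that the segment from $\bx$ to any $\by$ stays inside $\Omega$, so the fundamental theorem of calculus gives, with $\omega = (\by - \bx)/\norm{\by - \bx}_2$,
\[
\varphi(\by) - \varphi(\bx) = \int_0^{\norm{\by - \bx}_2} \nabla\varphi(\bx + s\omega)\cdot \omega \, ds .
\]
Integrating over $\by \in B(\bx,r)\cap\Omega$ and passing to polar coordinates, I obtain the averaging bound
\[
\frac{1}{\vert B(\bx,r)\vert}\int_{B(\bx,r)} \vert\varphi(\by) - \varphi(\bx)\vert \, d\by \leq C \int_{B(\bx,r)} \frac{\vert\nabla\varphi(\by)\vert}{\norm{\by - \bx}_2^{N-1}} \, d\by .
\]

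Next I estimate $\vert\varphi(\bx) - \varphi(\tilde{\bx})\vert$ for two points. Setting $r = \norm{\bx - \tilde{\bx}}_2$ and inserting averages over $W = B(\bx,r)\cap B(\tilde{\bx},r)$, the triangle inequality together with the bound above reduces matters to two Riesz potentials. The decisive step is then H\"older's inequality with exponents $p$ and $p' = p/(p-1)$:
\[
\int_{B(\bx,r)}\frac{\vert\nabla\varphi\vert}{\norm{\by-\bx}_2^{N-1}}\,d\by \leq \norm{\nabla\varphi}_{\cL^p}\left(\int_{B(\bx,r)}\norm{\by-\bx}_2^{(1-N)p'}\,d\by\right)^{1/p'}.
\]
The radial integral converges precisely because $p > N$ forces $(1-N)p' + N > 0$, and it evaluates to a constant times $r^{(1-N)p' + N}$; raising to the power $1/p'$ yields exactly the factor $r^{\,1 - N/p} = \norm{\bx - \tilde{\bx}}_2^{\gamma}$. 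Hence $\vert\varphi(\bx) - \varphi(\tilde{\bx})\vert \leq C\,\norm{\nabla\varphi}_{\cL^p}\,\norm{\bx - \tilde{\bx}}_2^{\gamma}$, which by definition (\ref{holder_coefficient}) controls $[\varphi]_{\cC^{0,\gamma}(\Omega)}$.

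To control $\norm{\varphi}_{\infty}$, I fix any $\bx \in \Omega$ and a ball $B$ of fixed radius and split
\[
\vert\varphi(\bx)\vert \leq \frac{1}{\vert B\vert}\int_{B}\vert\varphi(\by)\vert\,d\by + \frac{1}{\vert B\vert}\int_{B}\vert\varphi(\bx) - \varphi(\by)\vert\,d\by .
\]
The first term is bounded by $C\norm{\varphi}_{\cL^p}$ via H\"older's inequality, and the second by $C\norm{\nabla\varphi}_{\cL^p}$ via the averaging estimate from the first paragraph. Combining this with the seminorm bound and recalling $\norm{\varphi}_{\cC^{0,\gamma}(\Omega)} = \norm{\varphi}_{\infty} + [\varphi]_{\cC^{0,\gamma}(\Omega)}$ from (\ref{holder_norm}) delivers the claimed inequality $\norm{\varphi}_{\cC^{0,\gamma}(\Omega)} \leq C\,\norm{\varphi}_{\cW^{1,p}(\Omega)}$.

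I expect the main obstacle to be the singular-integral step and the exact exponent bookkeeping: verifying that $p > N$ is precisely the threshold making $\norm{\by - \bx}_2^{(1-N)p'}$ locally integrable and that the resulting power is sharply $r^{1-N/p} = r^{\gamma}$. A secondary care point is the endpoint $p = \infty$, where $p' = 1$ and one bounds $\vert\nabla\varphi\vert$ pointwise before integrating $\norm{\by - \bx}_2^{1-N}$, recovering $\gamma = 1$; this case I would treat separately or by letting $p' \to 1$. I also rely on convexity of $\Omega$ to keep the connecting segments inside the domain, so that the integral representation and the averaging over $B(\bx,r)\cap\Omega$ remain valid with constants absorbing the geometry of $\Omega$.
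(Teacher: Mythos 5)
The paper does not prove this statement at all: it is quoted verbatim as a known result, with the citation to Evans (Subsection 5.6.2, Theorem 4) standing in for the proof, so there is no in-paper argument to compare against. Your reconstruction is the standard potential-theoretic proof from that very reference -- the fundamental-theorem-of-calculus averaging lemma, the two-ball oscillation estimate, and H\"older's inequality with the exponent bookkeeping $(1-N)p' + N > 0 \iff p > N$ yielding exactly $r^{1-N/p}$ -- and it is correct; your closing caveat about needing convexity of $\Omega$ (or an extension operator for a general compact domain) to justify the segment representation is the right thing to flag, and is covered by the paper's standing assumption that $\Omega$ is compact and convex.
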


\begin{cor}
Specifically in $\R^{3},$ the theorem implies that

\beq
[\varphi]_{\cC^{0,1/4}(\Omega)} \leq C \norm{\nabla \varphi}_{\cL^4(\Omega)} ,
\eeq
since $\gamma := 1 - 3/4.$
\end{cor}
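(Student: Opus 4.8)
The plan is to obtain the corollary as a direct specialization of Morrey's inequality (Theorem~\ref{Morrey_ineq}) to the dimension at hand, combined with one observation about which part of each side of the inequality actually carries the estimate. First I would fix the parameters. We work in $\R^{3}$, so $N = 3$, and we demand the H\"{o}lder exponent $\gamma = 1/4$. Since the theorem ties the exponent to $p$ through $\gamma = 1 - N/p$, solving $1 - 3/p = 1/4$ forces $p = 4$. Because $p = 4 > 3 = N$, the hypothesis $N < p \leq \infty$ of Theorem~\ref{Morrey_ineq} is met, so the theorem applies with these choices and yields a constant $C = C(3,4)$ with
\beq
\norm{\varphi}_{\cC^{0,1/4}(\Omega)} \leq C \norm{\varphi}_{\cW^{1,4}(\Omega)}
\eeq
for every $\varphi \in \cC^{1}(\Omega)$.

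Next I would pass from the full H\"{o}lder norm on the left to the H\"{o}lder coefficient. By the definition (\ref{holder_norm}) of the H\"{o}lder norm as a sum of the supremum norm and the seminorm, and since both summands are nonnegative, we have $[\varphi]_{\cC^{0,1/4}(\Omega)} \leq \norm{\varphi}_{\cC^{0,1/4}(\Omega)}$. Chaining this with the displayed estimate gives $[\varphi]_{\cC^{0,1/4}(\Omega)} \leq C \norm{\varphi}_{\cW^{1,4}(\Omega)}$, which is the asserted inequality but with the full Sobolev norm on the right.

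The one step that needs genuine care -- and which I expect to be the crux -- is replacing $\norm{\varphi}_{\cW^{1,4}(\Omega)}$ by the gradient term $\norm{\nabla\varphi}_{\cL^4(\Omega)}$ alone. The full Sobolev norm also contains $\norm{\varphi}_{\cL^4(\Omega)}$, which cannot in general be discarded when the whole H\"{o}lder norm is in play. The reason it may be dropped here is that we have already reduced to the seminorm: both the H\"{o}lder coefficient $[\varphi]_{\cC^{0,1/4}(\Omega)}$ and the quantity $\norm{\nabla\varphi}_{\cL^4(\Omega)}$ are invariant under adding a constant to $\varphi$, whereas $\norm{\varphi}_{\cL^4(\Omega)}$ is not. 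I would therefore invoke the sharp, translation-invariant form of Morrey's inequality -- the pointwise oscillation estimate $\vert \varphi(\bx) - \varphi(\tilde{\bx}) \vert \leq C \norm{\bx - \tilde{\bx}}_2^{1/4} \norm{\nabla\varphi}_{\cL^4(\Omega)}$ that sits at the heart of the proof of Theorem~\ref{Morrey_ineq} -- rather than the stated norm-to-norm version. Dividing this inequality by $\norm{\bx - \tilde{\bx}}_2^{1/4}$ and taking the supremum over $\bx \neq \tilde{\bx}$ in the compact domain $\Omega$ produces exactly $[\varphi]_{\cC^{0,1/4}(\Omega)} \leq C \norm{\nabla\varphi}_{\cL^4(\Omega)}$, which closes the corollary. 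The main obstacle is thus not a computation but a matter of citing the correct (seminorm) formulation of Morrey's estimate and making explicit that the additive constant is annihilated on both sides simultaneously.
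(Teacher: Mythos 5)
Your proposal is correct, and it is in fact more careful than the paper, which states this corollary with no proof at all beyond the remark ``since $\gamma := 1 - 3/4$'' --- i.e.\ the author treats it as a bare substitution $N=3$, $p=4$ into Theorem~\ref{Morrey_ineq}. You rightly observe that this substitution only yields $\norm{\varphi}_{\cC^{0,1/4}(\Omega)} \leq C \norm{\varphi}_{\cW^{1,4}(\Omega)}$, which is \emph{not} the stated corollary: the corollary drops $\norm{\varphi}_{\cL^4(\Omega)}$ from the right-hand side, and that term cannot be discarded from a norm-to-norm inequality without an argument. Your repair --- passing to the seminorm on the left and invoking the translation-invariant oscillation form of Morrey's estimate, $\vert \varphi(\bx) - \varphi(\tilde{\bx}) \vert \leq C \norm{\bx - \tilde{\bx}}_2^{1/4} \norm{\nabla\varphi}_{\cL^4}$ --- is the standard and correct way to close this gap; an equivalent route is to apply the theorem to $\varphi$ minus its mean and absorb the remaining $\cL^4$ term via the Poincar\'e--Wirtinger inequality. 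One small caveat: the pointwise oscillation estimate in Evans is proved for balls (or all of $\R^{N}$), so on a bounded domain one needs either convexity or an extension operator to run it between arbitrary pairs of points; the paper's standing assumption that $\Omega$ is compact and convex supplies exactly this, and you should say so explicitly when citing that form of the estimate.
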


\begin{theorem}
\label{Holder_bounded_by_TV}
Over the compact domain $\Omega \subset \R^{3},$
with its volume $\vert \Omega \vert \in \R_{+},$ 
let $\varphi \in \cC^{1}(\Omega) \cap \cC^{0,1/4}(\Omega).$ 
Then H\"{o}lder coefficient $[\varphi]_{\cC^{0, 1/4}(\Omega)}$ 
of the function is bounded by its total variation $TV(\varphi)$ as such,

\begin{displaymath}
[\varphi]_{\cC^{0, 1/4}(\Omega)} \leq \frac{r(\Omega)^{3/4}}{\vert \Omega \vert} TV(\varphi), 
\mbox{ where } r(\Omega) := \sup_{\bx , \tilde{\bx} \in \Omega}\norm{\bx - \tilde{\bx}} .
\end{displaymath}
\end{theorem}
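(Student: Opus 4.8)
The plan is to reduce the H\"{o}lder quotient to a pointwise gradient bound and then convert that bound into the volume average that defines $TV(\varphi)$. Fix $\bx , \tilde{\bx} \in \Omega$; since $\Omega$ is convex the segment joining them lies in $\Omega$, so the $\cC^{1}$ mean value theorem supplies a point $\eta$ on that segment with $\varphi(\bx) - \varphi(\tilde{\bx}) = \nabla\varphi(\eta)\cdot(\bx - \tilde{\bx})$. Cauchy--Schwarz then yields $\vert \varphi(\bx) - \varphi(\tilde{\bx})\vert \leq \norm{\nabla\varphi(\eta)}_2 \, \norm{\bx - \tilde{\bx}}_2$, which is the elementary starting inequality.

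Next I would extract the H\"{o}lder exponent. Dividing by $\norm{\bx - \tilde{\bx}}_2^{1/4}$ leaves a surplus factor $\norm{\bx - \tilde{\bx}}_2^{3/4}$, controlled by the diameter via $\norm{\bx - \tilde{\bx}}_2^{3/4} \leq r(\Omega)^{3/4}$. Hence, for every admissible pair, $\vert \varphi(\bx) - \varphi(\tilde{\bx})\vert / \norm{\bx - \tilde{\bx}}_2^{1/4} \leq r(\Omega)^{3/4}\,\norm{\nabla\varphi(\eta)}_2$, and taking the supremum over $\bx , \tilde{\bx}$ bounds $[\varphi]_{\cC^{0,1/4}(\Omega)}$ by $r(\Omega)^{3/4}$ times a pointwise value of $\norm{\nabla\varphi}_2$.

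The remaining step is to trade that pointwise gradient value for the average $TV(\varphi)/\vert\Omega\vert$. Because $\varphi \in \cC^{1}(\Omega)$ and $\Omega$ is compact, $\norm{\nabla\varphi}_2$ is continuous, so the mean value theorem for integrals produces $\xi \in \Omega$ with $\int_{\Omega}\norm{\nabla\varphi}_2\,dx = \vert\Omega\vert\,\norm{\nabla\varphi(\xi)}_2$, i.e. $\norm{\nabla\varphi(\xi)}_2 = TV(\varphi)/\vert\Omega\vert$. This is exactly where the uniform continuity of $\varphi$ on the compact domain, emphasised just before the theorem, is invoked to legitimise passing between the pointwise and the integral description of the gradient.

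The hard part will be reconciling the two points: the mean value theorem for the difference quotient delivers $\eta$, whereas the mean value theorem for integrals delivers $\xi$, and to close the chain one needs $\norm{\nabla\varphi(\eta)}_2 \leq \norm{\nabla\varphi(\xi)}_2 = TV(\varphi)/\vert\Omega\vert$. This domination is not automatic, since an averaged gradient is generally smaller than a pointwise one, so I would try to fold the supremum defining $[\varphi]_{\cC^{0,1/4}(\Omega)}$ directly into the averaging, exploiting compactness of $\Omega$ so that the extremal configuration of $(\bx,\tilde{\bx})$ and the averaging point $\xi$ are selected compatibly. Making this identification rigorous --- rather than the elementary calculus estimates of the first two steps --- is the crux, and it is where the continuity and ``interchange of operations'' hypothesis must carry the argument in order to reach $[\varphi]_{\cC^{0,1/4}(\Omega)} \leq \frac{r(\Omega)^{3/4}}{\vert\Omega\vert}TV(\varphi)$.
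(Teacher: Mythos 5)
You have correctly isolated the one step that matters, and that step cannot be repaired. Your first two paragraphs are sound (and cleaner than the paper's componentwise manipulation): the mean value theorem plus Cauchy--Schwarz give, for each pair $\bx,\tilde{\bx}\in\Omega$, a point $\eta$ on the joining segment with
\[
\frac{\vert\varphi(\bx)-\varphi(\tilde{\bx})\vert}{\norm{\bx-\tilde{\bx}}_2^{1/4}} \le r(\Omega)^{3/4}\,\norm{\nabla\varphi(\eta)}_2 .
\]
But the final trade of the pointwise value $\norm{\nabla\varphi(\eta)}_2$ for the average $TV(\varphi)/\vert\Omega\vert$ is exactly the inequality ``pointwise value $\le$ mean value,'' which goes the wrong way; no appeal to compactness or uniform continuity lets you choose the integral mean-value point $\xi$ so that $\norm{\nabla\varphi(\eta)}_2\le\norm{\nabla\varphi(\xi)}_2$, because $\eta$ is dictated by the extremal pair while $\xi$ is dictated by the integral. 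For what it is worth, the paper's own proof commits the same error in disguise: it arrives at $[\varphi]_{\cC^{0,\gamma}(\Omega)}\le r^{1-\gamma}\norm{\nabla\varphi}_2$ with the right-hand side evaluated only at the limit point produced by the supremizing sequence, and then ``integrates both sides over $\Omega$'' as though that pointwise bound held at every point of $\Omega$. It does not.

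The gap is not merely technical: the asserted inequality is false, so no argument can close it. Take $\Omega$ the closed unit ball in $\R^3$ and let $\varphi_\epsilon$ be a smooth non-negative bump of height $1$ supported in a ball of radius $\epsilon$. Then $\norm{\nabla\varphi_\epsilon}_2=O(\epsilon^{-1})$ on a set of volume $O(\epsilon^3)$, so $TV(\varphi_\epsilon)=O(\epsilon^{2})$, whereas comparing the peak with a zero of $\varphi_\epsilon$ at distance $\epsilon$ gives $[\varphi_\epsilon]_{\cC^{0,1/4}(\Omega)}\ge \epsilon^{-1/4}$. The claimed bound would force $\epsilon^{-1/4}\le C\,\epsilon^{2}$, which fails as $\epsilon\to 0$. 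The correct conclusion of your first two steps is $[\varphi]_{\cC^{0,1/4}(\Omega)}\le r(\Omega)^{3/4}\sup_{\Omega}\norm{\nabla\varphi}_2$, i.e.\ a bound by the sup-norm of the gradient rather than by its $\cL^1$ average; the $TV$ bound as stated should be abandoned.
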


\begin{proof}
Recall our vectoral notations in $\R^{3},$
$\bx = (x, y, z)$ and $\tilde{\bx} = (\tx, \ty, \tz).$
Then for a fixed $\gamma \in (0,1],$ 
componentwise H\"{o}lder continuity in $\R^{3}$ is given by

\bea
[\varphi]_{\cC^{0,\gamma}(\Omega)} = 
\sup_{\bx , \tilde{\bx} \in \Omega \subset \R^{3}} \Bigg\{ \frac{\vert\varphi(x, y, z) - \varphi(\tx_{o}, \ty, \tz )\vert}{\norm{ \bx - \tilde{\bx} }_2^{\gamma} }
& , & \frac{\vert \varphi(x, y, z) - \varphi(\tx, \ty_{o}, \tz )\vert}{\norm{ \bx - \tilde{\bx} }_2^{\gamma}} , \cdots 
\nonumber\\
& \cdots & , \frac{\vert \varphi(x, y, z) - \varphi(\tx, \ty, \tz_{o} )\vert}{\norm{ \bx - \tilde{\bx} }_2^{\gamma}} \Bigg\} .
\nonumber
\eea
By the definition of Euclidean norm in (\ref{holder_euclidean}),

\begin{displaymath}
\norm{\bx - \tilde{\bx}}_2^{\gamma} \geq \sup_{\Omega}\{ \vert x - \tx_{o} \vert^{\gamma} , \vert y - \ty_{o} \vert^{\gamma} , \vert z - \tz_{o} \vert^{\gamma}\}.
\end{displaymath}
So this implies

\bea
[\varphi]_{\cC^{0,\gamma}(\Omega)} \leq 
\sup_{\bx , \tilde{\bx} \in \Omega \subset \R^{3}} \Bigg\{ \frac{\vert\varphi(x, y, z) - \varphi(\tx_{o}, \ty, \tz )\vert}{\vert x - \tx_{o} \vert^{\gamma} }
& , & \frac{\vert \varphi(x, y, z) - \varphi(\tx, \ty_{o}, \tz )\vert}{\vert y - \ty_{o} \vert^{\gamma}} ,
\nonumber\\
& , & \frac{\vert \varphi(x, y, z) - \varphi(\tx, \ty, \tz_{o} )\vert}{\vert z - \tz_{o} \vert^{\gamma}} \Bigg\} ,
\nonumber
\eea
\bea
 = \sup_{\bx , \tilde{\bx} \in \Omega \subset \R^{3}} \Bigg\{ \frac{\vert\varphi(x, y, z) - \varphi(\tx_{o}, \ty, \tz )\vert}{ \vert x - \tx_{o} \vert \vert x - \tx_{o} \vert^{\gamma - 1} }
& , &\frac{\vert \varphi(x, y, z) - \varphi(\tx, \ty_{o}, \tz )\vert}{\vert y - \ty_{o} \vert \vert y - \ty_{o} \vert^{\gamma - 1}} ,
\nonumber\\
& , & \frac{\vert \varphi(x, y, z) - \varphi(\tx, \ty, \tz_{o} )\vert}{\vert z - \tz_{o} \vert \vert z - \tz_{o} \vert^{\gamma - 1}} \Bigg\} ,
\nonumber
\eea
\bea
 = \sup_{\bx , \tilde{\bx} \in \Omega \subset \R^{3}} \Bigg\{ \frac{\vert\varphi(x, y, z) - \varphi(\tx_{o}, \ty, \tz )\vert}{ \vert x - \tx_{o} \vert }\vert x - \tx_{o} \vert^{1 - \gamma}
& , &\frac{\vert \varphi(x, y, z) - \varphi(\tx, \ty_{o}, \tz )\vert}{\vert y - \ty_{o} \vert }\vert y - \ty_{o} \vert^{1 - \gamma} , 
\nonumber\\
& , & \frac{\vert \varphi(x, y, z) - \varphi(\tx, \ty, \tz_{o} )\vert}{\vert z - \tz_{o} \vert }\vert z - \tz_{o} \vert^{1 - \gamma} \Bigg\} .
\nonumber
\eea
Here, the last equality in the chain is rather convenient to present since $\gamma - 1 < 0 < 1.$
Obviously, for any pair of points $(\bx , \tilde{\bx}) \in \Omega,$
there exists $s > 0$ such that s = $\norm{\bx - \tilde{\bx}}_2.$
Then,

\begin{displaymath}
r^{1 - \gamma} \geq s^{1 - \gamma} = \norm{\bx - \tilde{\bx}}_2^{1 - \gamma} \geq 
\sup_{\Omega}\{ \vert x - \tx_{o} \vert^{1 - \gamma} , \vert y - \ty_{o} \vert^{1 - \gamma} , \vert z - \tz_{o} \vert^{1 - \gamma}\},
\end{displaymath}
we have

\bea
[\varphi]_{\cC^{0,\gamma}(\Omega)} \leq 
\sup_{\bx , \tilde{\bx} \in \Omega \subset \R^{3}} \Bigg\{ & \frac{\vert\varphi(x, y, z) - \varphi(\tx_{o}, \ty, \tz )\vert}{ \vert x - \tx_{o} \vert }\norm{\bx - \tilde{\bx}}_2^{1 - \gamma} &
\nonumber\\ 
& , \frac{ \vert \varphi(x, y, z) - \varphi(\tx, \ty_{o}, \tz )\vert }{ \vert y - \ty_{o} \vert }\norm{\bx - \tilde{\bx}}_2^{1 - \gamma} &  
\nonumber\\
& , \frac{\vert \varphi(x, y, z) - \varphi(\tx, \ty, \tz_{o} )\vert}{\vert z - \tz_{o} \vert }\norm{\bx - \tilde{\bx}}_2^{1 - \gamma} \Bigg\}&
\nonumber
\eea
\bea
= \sup_{\bx , \tilde{\bx} \in \Omega \subset \R^{3} } \Bigg\{ \frac{\vert\varphi(x, y, z) - \varphi(\tx_{o}, \ty, \tz )\vert}{ \vert x - \tx_{o} \vert }s^{1 - \gamma} 
& , & \frac{ \vert \varphi(x, y, z) - \varphi(\tx, \ty_{o}, \tz )\vert }{ \vert y - \ty_{o} \vert }s^{1 - \gamma} , 
\nonumber\\
& , & \frac{\vert \varphi(x, y, z) - \varphi(\tx, \ty, \tz_{o} )\vert}{\vert z - \tz_{o} \vert }s^{1 - \gamma} \Bigg\} ,
\nonumber
\eea
\bea
\leq r^{1 - \gamma} \sup_{\bx , \tilde{\bx} \in \Omega \subset \R^{3} } \Bigg\{ \frac{\vert\varphi(x, y, z) - \varphi(\tx_{o}, \ty, \tz )\vert}{ \vert x - \tx_{o} \vert } 
& , & \frac{ \vert \varphi(x, y, z) - \varphi(\tx, \ty_{o}, \tz )\vert }{ \vert y - \ty_{o} \vert } ,
\nonumber\\
& , & \frac{\vert \varphi(x, y, z) - \varphi(\tx, \ty, \tz_{o} )\vert}{\vert z - \tz_{o} \vert } \Bigg\} .
\nonumber
\eea
Recall that our function $\varphi$ is continuous 
over the compact domain $\Omega$ which makes it
uniformly contiuous on the same domain.
Then we are allowed to interchage $\lim$ with $\sup.$
Now, moving on to the limit on both sides with respect to each component 
$\lim_{x \rightarrow \tx_0},$ $\lim_{y \rightarrow \ty_0}$ 
and $\lim_{z \rightarrow \tz_0},$ 

\bea
[\varphi]_{\cC^{0,\gamma}(\Omega)} & \leq & r^{1 - \gamma} \myarr{\left\vert \frac{\partial \varphi}{\partial x} \right\vert, 
\left\vert \frac{\partial \varphi}{\partial y} \right\vert, 
\left\vert \frac{\partial \varphi}{\partial z} \right\vert} 
\nonumber\\
& = & r^{1 - \gamma} \myarr{\left( \left\vert \frac{\partial \varphi}{\partial x} \right\vert^2 \right)^{1/2}, 
\left( \left\vert \frac{\partial \varphi}{\partial y} \right\vert^2 \right)^{1/2} , 
\left( \left\vert \frac{\partial \varphi}{\partial z} \right\vert^2 \right)^{1/2} } 
\nonumber\\
& \leq & r^{1 - \gamma} \norm{\nabla\varphi}_2
\nonumber
\eea
Again, the last inequality has been obtained by the fact
that sum of the components always remains greater than
each component itself. Now, integrate both sides over 
the compact domain $\Omega$ to yield

\begin{displaymath}
[\varphi]_{\cC^{0,\gamma}(\Omega)} \leq \frac{r^{1 - \gamma}}{\vert \Omega \vert} TV(\varphi) ,
\end{displaymath}
which is, to be more precise,

\begin{displaymath}
[\varphi]_{\cC^{0, 1/4}(\Omega)} \leq \frac{r^{3 / 4}}{\vert \Omega \vert} TV(\varphi) ,
\end{displaymath}
since $\gamma = 1/4$ in $\R^3.$

\end{proof}

This shows that H\"{o}lder coefficient of a function 
$\varphi \in \cC^{1}(\Omega) \cap \cC^{0,1/4}(\Omega)$
is an approximation for the total variation of the
same function. In the following theorems, we will
establish the reverse direction of this statement.
To do so, we will make use of the Lipschitz continuity
which is a specific case of H\"{o}lder continuity 
in (\ref{holder_cont}) for $\gamma = 1.$

\begin{theorem}
\label{holder_embedded_into_TV}

Under the same conditions of Theorem \ref{Holder_bounded_by_TV}, 
for $\gamma = 1$ in (\ref{holder_cont}),

\beq
\norm{\nabla \varphi}_{\cL^{1}} \leq \kappa \vert \Omega \vert .
\eeq
\end{theorem}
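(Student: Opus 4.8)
The plan is to reduce the claim to the pointwise estimate $\norm{\nabla\varphi(\bx)}_2 \leq \kappa$ holding throughout $\Omega$ and then integrate. Since the hypothesis fixes $\gamma = 1$ in (\ref{holder_cont}), the function $\varphi$ is Lipschitz continuous with constant $\kappa$, i.e.\ $\vert\varphi(\bx) - \varphi(\tilde{\bx})\vert \leq \kappa\norm{\bx - \tilde{\bx}}_2$ for all $\bx, \tilde{\bx} \in \Omega$. Observe first that the left-hand side is exactly the total variation: $\norm{\nabla\varphi}_{\cL^1} = \int_{\Omega}\norm{\nabla\varphi}_2\,d\bx = TV(\varphi)$ as in (\ref{tv_integral_form}), so it suffices to control the integrand at each point.

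First I would fix an arbitrary interior $\bx \in \Omega$ and a unit vector $\bv$, and form the difference quotient $[\varphi(\bx + t\bv) - \varphi(\bx)]/t$. Because $\Omega$ is convex and compact, for small $t > 0$ the point $\bx + t\bv$ remains in $\Omega$, and the Lipschitz estimate gives $\vert\varphi(\bx + t\bv) - \varphi(\bx)\vert/t \leq \kappa\norm{t\bv}_2/t = \kappa$. Since $\varphi \in \cC^{1}(\Omega)$, letting $t \to 0$ yields the directional-derivative bound $\vert \nabla\varphi(\bx)\cdot\bv\vert \leq \kappa$ for every unit direction $\bv$; the interchange of limit and supremum is justified, exactly as in Theorem \ref{Holder_bounded_by_TV}, by uniform continuity of $\varphi$ on the compact domain $\Omega$.

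The key step is then to test this against the gradient's own direction: at any point where $\nabla\varphi(\bx) \neq 0$, choosing $\bv = \nabla\varphi(\bx)/\norm{\nabla\varphi(\bx)}_2$ gives $\nabla\varphi(\bx)\cdot\bv = \norm{\nabla\varphi(\bx)}_2$, so the bound collapses to $\norm{\nabla\varphi(\bx)}_2 \leq \kappa$ (which holds trivially where the gradient vanishes). Integrating this pointwise inequality over $\Omega$ delivers $\int_{\Omega}\norm{\nabla\varphi}_2\,d\bx \leq \kappa\int_{\Omega}d\bx = \kappa\vert\Omega\vert$, which is the asserted bound.

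The main obstacle to watch for is the temptation to argue componentwise as in the proof of Theorem \ref{Holder_bounded_by_TV}: bounding each of $\vert\partial\varphi/\partial x\vert$, $\vert\partial\varphi/\partial y\vert$, $\vert\partial\varphi/\partial z\vert$ separately by $\kappa$ only yields $\norm{\nabla\varphi}_2 \leq \sqrt{3}\,\kappa$, which is too weak for the stated constant. The remedy, and the crux of the proof, is precisely to test the Lipschitz condition along the gradient direction, where the Euclidean step length $\norm{t\bv}_2$ matches the denominator of the difference quotient so that no dimensional factor $\sqrt{3}$ survives.
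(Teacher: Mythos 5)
Your proof is correct, and it is tighter than the one in the paper. The paper's argument starts componentwise, bounding $\norm{\nabla\varphi}_2$ by the sum $\left\vert\frac{\partial\varphi}{\partial x}\right\vert+\left\vert\frac{\partial\varphi}{\partial y}\right\vert+\left\vert\frac{\partial\varphi}{\partial z}\right\vert$ and writing each term as a coordinate difference quotient; pursued literally, that route only yields $3\kappa\vert\Omega\vert$ (or $\sqrt{3}\,\kappa\vert\Omega\vert$ with the Euclidean combination, exactly the loss you flag). The paper then switches to a ``unified'' radial quotient $\vert\varphi(\bx)-\varphi(\tilde{\bx}_{o})\vert/\Vert\bx-\tilde{\bx}_{o}\Vert_2$ without justifying why $\norm{\nabla\varphi}_2$ is dominated by the limit of that single quotient, and invokes Lebesgue dominated convergence to exchange the limit with the integral over $\Omega$. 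Your directional-derivative argument supplies precisely the missing justification: testing the Lipschitz bound along $\bv=\nabla\varphi(\bx)/\norm{\nabla\varphi(\bx)}_2$ shows the radial quotient controls the full Euclidean norm of the gradient with constant $\kappa$ and no dimensional factor. It also establishes the pointwise bound $\norm{\nabla\varphi(\bx)}_2\leq\kappa$ before integrating, so no limit--integral interchange is needed at all. Two minor remarks: the appeal to ``interchange of limit and supremum by uniform continuity'' is superfluous, since $\varphi\in\cC^{1}(\Omega)$ already guarantees the difference quotients converge to $\nabla\varphi(\bx)\cdot\bv$ and each is $\leq\kappa$; and you should note that the pointwise bound is obtained on the interior of $\Omega$, which suffices because the boundary of a compact convex body in $\R^{3}$ has Lebesgue measure zero.
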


\begin{proof}

As we have introduced in the Section \ref{notations} by (\ref{magnitude_of_the_grad}),

\beq
\label{gradient_norm_bound}
\norm{\nabla\varphi}_2 = \left( \left\vert\frac{\partial\varphi}{\partial x}\right\vert^2 + 
\left\vert\frac{\partial\varphi}{\partial y}\right\vert^2 + 
\left\vert\frac{\partial\varphi}{\partial z}\right\vert^2 \right)^{1/2} 
\leq \left\vert\frac{\partial\varphi}{\partial x}\right\vert + 
\left\vert\frac{\partial\varphi}{\partial y}\right\vert + 
\left\vert\frac{\partial\varphi}{\partial z}\right\vert .
\eeq
This inequality has been obtained by using the following 
simple identity

\begin{displaymath}
(p + q + s)^2 = p^2 + q^2 + s^2 + 2pq + 2s(p + q) \geq p^2 + q^2 + s^2 ,
\end{displaymath}
for $p, q, s \in \R_{+}.$ This implies 

\begin{displaymath}
p + q + s \geq (p^2 + q^2 + s^2)^{1/2}.
\end{displaymath}
To arrive at (\ref{gradient_norm_bound}), set 
$p := \left\vert \frac{\partial\varphi}{\partial x} \right\vert,$
$q := \left\vert \frac{\partial\varphi}{\partial y} \right\vert,$
and lastly $s := \left\vert \frac{\partial\varphi}{\partial z} \right\vert.$
Now by the definition of partial derivative in the componentwise sense,

\bea
\norm{\nabla\varphi}_2  \leq \lim_{x \rightarrow \tx_{o}}\frac{\vert\varphi(x, y, z) - \varphi(\tx_{o}, \ty, \tz)\vert}{\vert x - \tx_{o} \vert}
& + & \lim_{y \rightarrow \ty_{o}}\frac{\vert\varphi(x, y, z) - \varphi(\tx, \ty_{o}, \tz)\vert}{\vert y - \ty_{o} \vert} + 
\nonumber\\
& + & \lim_{z \rightarrow \tz_{o}}\frac{\vert\varphi(x, y, z) - \varphi(\tx, \ty, \tz_{o})\vert}{\vert z - \tz_{o} \vert}
\nonumber
\eea
Gradient of the functional $\varphi(\bx)$ over the
compact domain is valid for any $\tilde{\bx}_{o} \in \Omega.$ 
Therefore, we continue with our proof in the unified form. 
First observe that by the Lebesgue dominated convergence 
theorem,

\bea
\label{integral_bound1}
\int_{\Omega}\norm{\nabla\varphi(\tilde{\bx}_{o})}_2 d\tilde{\bx}_{o}
& \leq & \int_{\Omega} \lim_{\bx \rightarrow \tilde{\bx}_{o}} 
\left\{\frac{\vert\varphi(\bx) - \varphi(\tilde{\bx}_{o})\vert}
{\Vert \bx - \tilde{\bx}_{o} \Vert_{2}} \right\} d\tilde{\bx}_{o} 
\nonumber\\
& = &  \lim_{\bx \rightarrow \tilde{\bx}_{o}} \int_{\Omega} 
\left\{\frac{\vert\varphi(\bx) - \varphi(\tilde{\bx}_{o})\vert}
{\Vert \bx - \tilde{\bx}_{o} \Vert_{2}}\right\} d\tilde{\bx}_{o} .
\eea
Since $\varphi \in \cC^{1}(\Omega),$ then H\"{o}lder 
continuity given by (\ref{holder_cont}) is satisfied
for $\gamma = 1,$

\begin{displaymath}
\vert \varphi(\bx) - \varphi(\tilde{\bx}_{o}) \vert
\leq \kappa \Vert \bx - \tilde{\bx}_{o} \Vert_{2}
\end{displaymath}
which is Lipschitz continuity. Then (\ref{integral_bound1})
reads

\beq
\int_{\Omega}\norm{\nabla\varphi(\tilde{\bx}_{o})}_2 d\tilde{\bx}_{o} \leq
 \lim_{\bx \rightarrow \tilde{\bx}_{o}} \int_{\Omega} \kappa d\tilde{\bx}_{o} = \kappa\vert \Omega \vert .
\eeq

\end{proof}

We formulate the last formulatin for this section 
which is an immediate consequence of this theorem.

\begin{cor}
\label{euclidean_grad_squared}

Under the same conditions of Theorem \ref{holder_embedded_into_TV}, 
then,

\beq
\int_{\Omega} \norm{\nabla \varphi}_2^2 dx \leq \kappa^{2}\vert \Omega \vert^{2} .
\eeq
\end{cor}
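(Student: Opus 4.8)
The plan is to deduce the bound from the pointwise control on the gradient that Lipschitz continuity supplies, combined with the $\cL^1$ estimate already established in Theorem \ref{holder_embedded_into_TV}. First I would record the pointwise consequence of $\gamma = 1$ H\"older (Lipschitz) continuity: for every unit direction $\mathbf{e}$ the corresponding difference quotient of $\varphi$ is bounded by $\kappa$, so that $\vert \nabla\varphi(\bx)\cdot \mathbf{e}\vert \leq \kappa$, and taking the supremum over unit directions gives $\norm{\nabla\varphi(\bx)}_2 \leq \kappa$ at every $\bx \in \Omega$. This is exactly the estimate produced inside the proof of Theorem \ref{holder_embedded_into_TV}, where each componentwise difference quotient was dominated by $\kappa$ in the limit $\bx \to \tilde{\bx}_{o}$; the only new observation is that the same bound holds for the full Euclidean norm of the gradient.

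Second, I would factor $\norm{\nabla\varphi}_2^2 = \norm{\nabla\varphi}_2 \cdot \norm{\nabla\varphi}_2$ and use the pointwise bound on one factor to extract a constant, then feed in the $\cL^1$ conclusion of Theorem \ref{holder_embedded_into_TV}:
\beq
\int_{\Omega} \norm{\nabla\varphi}_2^2 \, dx \leq \Big( \sup_{\bx \in \Omega} \norm{\nabla\varphi(\bx)}_2 \Big) \int_{\Omega} \norm{\nabla\varphi}_2 \, dx \leq \kappa \, \norm{\nabla\varphi}_{\cL^1} \leq \kappa^2 \vert\Omega\vert .
\eeq
This chain is a routine $\cL^\infty$-times-$\cL^1$ estimate and carries the entire analytic content of the statement.

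The remaining, and genuinely delicate, point is passing from $\kappa^2\vert\Omega\vert$ to the stated $\kappa^2\vert\Omega\vert^2$. The two expressions differ by the factor $\vert\Omega\vert$, so the displayed corollary is the \emph{weaker} inequality precisely when $\vert\Omega\vert \geq 1$. I would therefore either invoke a normalization placing the volume of the domain at least at one, whence $\kappa^2\vert\Omega\vert \leq \kappa^2\vert\Omega\vert^2$ closes the argument at once, or simply record the sharper bound $\int_{\Omega}\norm{\nabla\varphi}_2^2\,dx \leq \kappa^2\vert\Omega\vert$ and note that the corollary follows a fortiori under that size hypothesis. I expect this volume comparison to be the only real obstacle: the pointwise gradient bound and the single integration are immediate, and what actually needs care is making the dependence on $\vert\Omega\vert$ honest rather than simply asserting the quadratic power.
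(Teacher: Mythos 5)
Your argument is correct and, at its core, identical to the paper's: both reduce the claim to the pointwise Lipschitz bound on the gradient and then integrate over $\Omega$. The paper squares the componentwise difference quotient inside the integral, interchanges $\lim$ and $\int$, and bounds the integrand by $\kappa^2$; you instead take the supremum of directional difference quotients to get $\Vert\nabla\varphi(\bx)\Vert_2\leq\kappa$ pointwise and then use an $\cL^\infty$--$\cL^1$ splitting against Theorem \ref{holder_embedded_into_TV}. These are the same estimate in different packaging, and both land on $\int_\Omega\Vert\nabla\varphi\Vert_2^2\,dx\leq\kappa^2\vert\Omega\vert$. The point you flag as ``genuinely delicate'' is in fact a real defect of the stated corollary, not of your argument: the paper's own proof ends with $\lim_{\bx\to\tilde{\bx}_o}\int_\Omega\kappa^2\,d\tilde{\bx}_o$, which evaluates to $\kappa^2\vert\Omega\vert$, and the exponent $2$ on $\vert\Omega\vert$ in the last line is written down with no justification. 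So the honest conclusion is the sharper bound $\kappa^2\vert\Omega\vert$, and the displayed $\kappa^2\vert\Omega\vert^2$ only follows a fortiori under the additional hypothesis $\vert\Omega\vert\geq 1$, exactly as you say. This matters downstream: Theorem \ref{bound_for_successive_TV} and the regularization-parameter rules inherit the factor $\vert\Omega\vert^2$ from this corollary, so the constants there should be revisited with the corrected power of the volume.
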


\begin{proof}
Again, by the definition of Euclidean norm in
Section \ref{notations} by (\ref{magnitude_of_the_grad}),

\begin{displaymath}
\norm{\nabla \varphi}_2^2 = \left\vert \frac{\partial \varphi}{\partial x} \right\vert^{2} +
\left\vert \frac{\partial \varphi}{\partial y} \right\vert^{2} +
\left\vert \frac{\partial \varphi}{\partial z} \right\vert^{2} .
\end{displaymath}
Analogous to the proof of Theorem \ref{holder_embedded_into_TV},

\bea
\label{integral_bound}
\int_{\Omega}\norm{\nabla\varphi(\tilde{\bx}_{o})}_2^2 d\tilde{\bx}_{o}
& \leq & \int_{\Omega} \lim_{\bx \rightarrow \tilde{\bx}_{o}} 
\left\{\frac{\vert\varphi(\bx) - \varphi(\tilde{\bx}_{o})\vert}
{\Vert \bx - \tilde{\bx}_{o} \Vert_{2}} \right\}^2 d\tilde{\bx}_{o} 
\nonumber\\
& = &  \lim_{\bx \rightarrow \tilde{\bx}_{o}} \int_{\Omega} 
\left\{\frac{\vert\varphi(\bx) - \varphi(\tilde{\bx}_{o})\vert}
{\Vert \bx - \tilde{\bx}_{o} \Vert_{2}}\right\}^2 d\tilde{\bx}_{o} 
\nonumber\\
& \leq & \kappa^2 \vert \Omega \vert^2
\eea
since $\varphi(\bx) \in \cC^{1}(\Omega).$
\end{proof}


\section{Smoothed-TV Regularization Is an Admissible Regularization Strategy 
With the H\"{o}lder Continuity}
\label{tv_application}

We will define such a regularization parameter which will simultaneously 
enable us to prove the convergence of the smoothed-TV
regularization and to estimate the dicrepancy
$\norm{\cT\varphi- f^{\delta}}$ for the corresponding
regularization strategy, \cite{ChanGolubMulet99}.
Unlike the available literature, 
\cite{AcarVogel94, BachmayrBurger09, BardsleyLuttman09, ChambolleLions97,
ChanChen06, ChanGolubMulet99, DobsonScherzer96, 
DobsonVogel97, VogelOman96},
we define discrepancy principle for the smoothed-TV regularization
under a particular rule for the choice of regularization parameter.
Furthermore, still with the same regularization parameter,
we manage to show that smoothed-TV regularization is an 
admissible regularization strategy with H\"{o}lder continuity.
Throughout this section, the fact that our targeted
solution function is H\"{o}lder continuous will be to our benefit
to be able provide an implementable regularization parameter 
for copmuterized environment.
Hereafter, the component $\bx$ is replaced by $x$ only 
for the sake of simplicity. 

To be able to show the convergence of 
$\norm{\varphi_{\alpha(\delta)} - \varphi^{\dagger}},$
we will refer to Bregman divergence. In Proposition 
\ref{proposition_q-convexity}, we have demonstrated
the relation between strong convexity and $2-$convexity.
Convexity of the smoothed total variation penalizer
has been established in \cite[Theorem 2.4]{AcarVogel94}.
We will ensure the strong convexity of the same penalizer
in the following formulation.

\begin{theorem}
\label{smoothed_TV_strongly_convex}

For any $\beta > 0,$ the functional 
$J(\varphi) := \int_{\Omega} \sqrt{\norm{\nabla\varphi}_2^2 + \beta} dx$
is strongly convex.
\end{theorem}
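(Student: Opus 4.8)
The plan is to establish strong convexity through the second Gâteaux variation of $J$, reducing the statement to a pointwise lower bound on the Hessian of the integrand $f(\bp) := \sqrt{\norm{\bp}_2^2 + \beta}$, $\bp \in \R^3$. Since $J(\varphi) = \int_{\Omega} f(\nabla\varphi)\, dx$ is the integral of a smooth convex integrand composed with the linear map $\varphi \mapsto \nabla\varphi$, its second variation in a direction $h$ is $\scal{\nabla^2 J(\varphi) h}{h} = \int_{\Omega} \scal{\nabla^2 f(\nabla\varphi)\nabla h}{\nabla h}\, dx$, so strong convexity of $J$ follows once the quadratic form $\scal{\nabla^2 f(\bp)\bq}{\bq}$ is bounded below by $c\norm{\bq}_2^2$ uniformly in $\bp$.

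First I would compute the gradient and Hessian of $f$ directly. One finds $\nabla f(\bp) = \bp/(\norm{\bp}_2^2+\beta)^{1/2}$ and $\nabla^2 f(\bp) = (\norm{\bp}_2^2+\beta)^{-1/2}\big(I - \bp\bp^{T}/(\norm{\bp}_2^2+\beta)\big)$. Evaluating the associated quadratic form at an arbitrary $\bq \in \R^3$ and applying the Cauchy--Schwarz inequality $(\bp\cdot\bq)^2 \le \norm{\bp}_2^2\norm{\bq}_2^2$ gives the clean lower bound $\scal{\nabla^2 f(\bp)\bq}{\bq} \ge \beta(\norm{\bp}_2^2+\beta)^{-3/2}\norm{\bq}_2^2 \ge 0$; the nonnegativity already re-proves the convexity established in \cite[Theorem 2.4]{AcarVogel94}, while the explicit factor $\beta(\norm{\bp}_2^2+\beta)^{-3/2}$ is what drives a strict lower bound. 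Substituting $\bp = \nabla\varphi$, $\bq = \nabla h$ and integrating yields $\scal{\nabla^2 J(\varphi)h}{h} \ge \int_{\Omega} \beta(\norm{\nabla\varphi}_2^2+\beta)^{-3/2}\norm{\nabla h}_2^2\, dx$.

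The main obstacle is that this pointwise modulus degenerates to $0$ as $\norm{\nabla\varphi}_2 \to \infty$, so no single constant $c>0$ works on the whole space and a genuine uniform modulus requires an a priori gradient bound. Here the working hypotheses rescue the argument: on the compact domain $\Omega$ a function $\varphi \in \cC^1(\Omega)$ satisfies $\norm{\nabla\varphi}_2 \le G$ for some finite $G$, whence $\beta(\norm{\nabla\varphi}_2^2+\beta)^{-3/2} \ge \beta(G^2+\beta)^{-3/2} =: c > 0$ and $\scal{\nabla^2 J(\varphi)h}{h} \ge c\norm{\nabla h}_{\cL^2}^2$, giving strong convexity with modulus $c$ in the gradient ($H^1$-seminorm) sense. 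This local, ball-dependent constant is exactly what the $2$-convexity Definition \ref{total_convexity} asks for (for each $M$ there is a $c^{\ast}$), so I would phrase the conclusion to match that definition rather than claim a globally uniform modulus; reconciling the $\norm{\nabla h}$-based bound with a bound in the ambient $\cH$-norm is the remaining delicate point and would lean on Corollary \ref{euclidean_grad_squared} together with the H\"older and embedding estimates of Section \ref{holder_tv_spaces}.
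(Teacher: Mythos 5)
Your computation of the Hessian of the integrand is correct and is considerably more substantial than the paper's own argument: the paper reduces everything to the scalar function $g(p)=\sqrt{p^2+\beta}$, computes $g''(p)=\beta/(p^2+\beta)^{3/2}>0$, and stops (indeed it opens with ``it suffices to prove that $\nabla^2 J(\varphi)\geq 0$,'' which is only positive semidefiniteness, i.e.\ plain convexity). Your multivariate version, $\scal{\nabla^2 f(\bp)\bq}{\bq}\geq \beta(\norm{\bp}_2^2+\beta)^{-3/2}\norm{\bq}_2^2$ via Cauchy--Schwarz, is the honest form of that calculation, and it exposes the two facts the paper glosses over: the pointwise modulus degenerates as $\norm{\bp}_2\to\infty$, and the resulting lower bound controls only $\norm{\nabla h}_{\cL^2}^2$, not $\norm{h}_{\cH}^2$.

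The genuine gap is the final step you defer as ``the remaining delicate point,'' and it cannot be closed as you suggest. Taking $h\equiv c$ constant gives $\nabla h=0$, so your quadratic form vanishes while $\norm{h}_{\cL^2}=|c|\,|\Omega|^{1/2}>0$; equivalently, $J(\varphi+c)=J(\varphi)$, so $J$ is invariant along the one-dimensional subspace of constants and cannot be strongly convex (nor $2$-convex, nor even strictly convex) with respect to the ambient $\cL^2$-norm without restricting to a subspace on which a Poincar\'e inequality $\norm{h}_{\cL^2}\leq C\norm{\nabla h}_{\cL^2}$ holds (zero trace, zero mean, or similar). Corollary \ref{euclidean_grad_squared} points in the wrong direction for this: it gives an \emph{upper} bound on $\int_\Omega\norm{\nabla\varphi}_2^2\,dx$, whereas you need a lower bound on $\norm{\nabla h}_{\cL^2}$ in terms of $\norm{h}_{\cL^2}$. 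A second, smaller issue: your a priori gradient bound $G$ is a bound for a fixed $\cC^1$ function, while Definition \ref{total_convexity} quantifies over all $u$ with $\norm{u-u^\ast}_{\cH}\leq M$, and an $\cL^2$-ball contains functions with arbitrarily large gradients, so the constant $c=\beta(G^2+\beta)^{-3/2}$ is not uniform over the set the definition requires. In short, your proposal correctly identifies that the theorem as stated does not follow from the Hessian computation alone; the paper's proof has the same defects but does not notice them.
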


\begin{proof}

It suffices to prove that $\nabla^2 J(\varphi) \geq 0.$
To avoid confusion in the calculations, we will make an 
assignment $g(p) = \sqrt{\vert p \vert^2 + \beta}$
where $p =  \nabla\varphi .$ 
According to Leibniz integral rule, calculating 
$\nabla^2 J(\varphi) \geq 0$ and $g^{''}(p)$
are equivalent to each other. Then

\begin{displaymath}
g^{'}(p) = \frac{p}{\sqrt{p^2 + \beta}},
\end{displaymath}
and likewise

\begin{displaymath}
g^{''}(p) = \frac{\beta}{(p^2 + \beta)^{3/2}} .
\end{displaymath}
Obviously, $g^{''}(p) > 0$ for any $\beta > 0.$

%

\end{proof}

\begin{theorem}
\label{bound_for_successive_TV}
Over the compact domain $\Omega \subset \R^{3},$
assume that $u, v \in \cC^1(\Omega) \bigcap \cC^{0,\gamma}(\Omega).$
Then there exists a dynamical positive real-valued functional 
$\cK(u) : \cC^1(\Omega) \rightarrow \R_{+}$ depending 
on $u$ such that

\bea
J(u) - J(v) \leq 2\kappa^2 \vert \Omega \vert^2 \cK(u) \norm{\nabla(u-v)}_2 ,
\nonumber
\eea
for $J(\cdot)$ defined by 
$J(\cdot) = \int_{\Omega} \sqrt{\norm{\nabla(\cdot)}_2^2 + \beta} dx$
and where $\kappa$ satisfies (\ref{holder_cont}) for $\gamma  =1.$
\end{theorem}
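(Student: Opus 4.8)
The plan is to linearize the smoothed integrand $g(p) := \sqrt{\vert p\vert^2 + \beta}$ (with $p = \nabla\varphi$, as in the proof of Theorem \ref{smoothed_TV_strongly_convex}) and then to invoke Corollary \ref{euclidean_grad_squared}, which is the natural source of the factor $\kappa^2\vert\Omega\vert^2$. First I would write $J(u) - J(v) = \int_\Omega [g(\nabla u) - g(\nabla v)]\,dx$. Since $J$ is convex, \cite[Theorem 2.4]{AcarVogel94}, and differentiable, the convexity (subgradient) inequality gives $J(u) - J(v) \leq \langle \nabla J(u), u - v\rangle = \int_\Omega \frac{\nabla u\cdot\nabla(u-v)}{\sqrt{\norm{\nabla u}_2^2 + \beta}}\,dx$; equivalently one may apply the mean value theorem to $t\mapsto g(\nabla v + t\nabla(u-v))$, which produces the same bounded weight $w := \nabla u/\sqrt{\norm{\nabla u}_2^2 + \beta}$ satisfying $\norm{w}_2 < 1$ pointwise.

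Next I would apply the Cauchy--Schwarz inequality pointwise to the integrand, $\nabla u\cdot\nabla(u-v) \leq \norm{\nabla u}_2\,\norm{\nabla(u-v)}_2$, in order to isolate the factor $\norm{\nabla(u-v)}_2$ that must survive on the right-hand side. Using the crude estimate $1/\sqrt{\norm{\nabla u}_2^2+\beta}\leq 1/\sqrt\beta$ together with the triangle inequality $\norm{\nabla(u-v)}_2 \leq \norm{\nabla u}_2 + \norm{\nabla v}_2$, the residual $u$- and $v$-dependent contributions are controlled by $\int_\Omega \norm{\nabla u}_2^2\,dx$ and $\int_\Omega \norm{\nabla v}_2^2\,dx$, each bounded by $\kappa^2\vert\Omega\vert^2$ via Corollary \ref{euclidean_grad_squared}. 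This is exactly where the advertised factor $2\kappa^2\vert\Omega\vert^2$ originates and where the Lipschitz hypothesis ($\gamma = 1$) is used. Collecting the leftover $u$-dependent quantities (the power of $\beta$ and the $\cL^2$-type weight coming from $w$) into a single positive functional yields the dynamical coefficient $\cK(u)$, producing $J(u) - J(v) \leq 2\kappa^2\vert\Omega\vert^2\,\cK(u)\,\norm{\nabla(u-v)}_2$.

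The main obstacle I anticipate is bookkeeping rather than genuine analysis: one must fix in which sense $\norm{\nabla(u-v)}_2$ is read on the right-hand side (the pointwise Euclidean norm versus its $\cL^2(\Omega)$ counterpart) and then define $\cK(u)$ so that the constants line up precisely with the prefactor $2\kappa^2\vert\Omega\vert^2$; the freedom to let $\cK$ depend on $u$ makes this packaging always achievable once any finite bound of the form $C(u)\,\norm{\nabla(u-v)}_2$ has been secured. A secondary point requiring care is justifying the convexity gradient inequality, namely that $J$ is G\^{a}teaux differentiable with $\langle\nabla J(u),h\rangle = \int_\Omega \nabla u\cdot\nabla h/\sqrt{\norm{\nabla u}_2^2+\beta}\,dx$; this follows from the smoothness of $g$ and the Leibniz integral rule already invoked in Theorem \ref{smoothed_TV_strongly_convex}, combined with the uniform continuity of $\varphi$ on the compact domain $\Omega$ that the paper repeatedly uses to exchange limits and integrals.
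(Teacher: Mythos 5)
Your argument is essentially correct, but it takes a genuinely different route from the paper's. The paper does not use the convexity gradient inequality at all: it writes $J(u)-J(v)$ via the conjugate identity $\sqrt{A}-\sqrt{B}=(A-B)/(\sqrt{A}+\sqrt{B})$ with $A=\norm{\nabla u}_2^2+\beta$, $B=\norm{\nabla v}_2^2+\beta$, drops one square root from the denominator and bounds the remaining one below by $(\cU+\beta)^{1/2}$ with $\cU=\min_{x\in\Omega}\norm{\nabla u}_2^2$ (whence $\cK(u)=(\cU+\beta)^{-1/2}$), factors the numerator as $(\norm{\nabla u}_2-\norm{\nabla v}_2)(\norm{\nabla u}_2+\norm{\nabla v}_2)$, applies the reverse triangle inequality and then the integral H\"{o}lder inequality, and finally bounds $\int_\Omega(\norm{\nabla u}_2^2+\norm{\nabla v}_2^2)\,dx$ by Corollary \ref{euclidean_grad_squared}. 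Your linearization $J(u)-J(v)\leq\langle\nabla J(u),u-v\rangle$ with the weight $w=\nabla u/\sqrt{\norm{\nabla u}_2^2+\beta}$ is cleaner and in fact stronger: since $\norm{w}_2<1$ pointwise, it already gives $J(u)-J(v)\leq\int_\Omega\norm{\nabla(u-v)}_2\,dx\leq\vert\Omega\vert^{1/2}\left(\int_\Omega\norm{\nabla(u-v)}_2^2\,dx\right)^{1/2}$, a bound of the required form that does not need Corollary \ref{euclidean_grad_squared} or the Lipschitz hypothesis at all; the factor $2\kappa^2\vert\Omega\vert^2$ can then be manufactured by the choice of $\cK(u)$, exactly as you observe. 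One step of yours should not be taken literally, however: invoking the triangle inequality $\norm{\nabla(u-v)}_2\leq\norm{\nabla u}_2+\norm{\nabla v}_2$ at that point would destroy the factor $\norm{\nabla(u-v)}_2$ that must survive on the right-hand side; the correct bookkeeping is the integral Cauchy--Schwarz pairing of $\norm{\nabla u}_2$ against $\norm{\nabla(u-v)}_2$ (or simply the pointwise bound $\norm{w}_2<1$). Net assessment: your route is more elementary and avoids a delicate point in the paper's own chain, namely that dropping a term from the denominator of $(A-B)/(\sqrt{A}+\sqrt{B})$ is only an upper bound where $A\geq B$; what it gives up is the paper's specific form of $\cK(u)$ in terms of $\cU$ and $\beta$, which is reused verbatim in Theorem \ref{discrepancy_for_smoothed_TV}.
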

%

\begin{proof}
By the definition of $J(\cdot) := \int_{\Omega} \sqrt{\norm{ \nabla \cdot }_2^2 + \beta} dx,$

\bea
J(u) - J(v) & = & \int_{\Omega}\frac{\norm{ \nabla u }_2^2 - \norm{ \nabla v }_2^2}{(\norm{ \nabla u }_2^2 + \beta)^{1/2} + (\norm{ \nabla v }_2^2 + \beta)^{1/2}} dx
\nonumber\\
& \leq & \int_{\Omega}\frac{\norm{ \nabla u }_2^2 - \norm{ \nabla v }_2^2}{(\norm{ \nabla v }_2^2 + \beta)^{1/2} } dx.
\eea
Now choose $\cU = \min_{x \in \Omega} \{ \norm{ \nabla u }_2^2 \}$ to have,

\bea
J(u) - J(v) & \leq & \frac{1}{(\cU + \beta)^{1/2}} \int_{\Omega} (\norm{ \nabla u }_2^2 - \norm{ \nabla v }_2^2) dx
\nonumber\\
& = & \frac{1}{(\cU + \beta)^{1/2}} \int_{\Omega}(\norm{ \nabla u }_2 - \norm{ \nabla v }_2)(\norm{ \nabla u }_2 + \norm{ \nabla v }_2) dx
\nonumber\\
& \leq & \frac{1}{(\cU + \beta)^{1/2}} \int_{\Omega} (\norm{ \nabla u - \nabla v }_2 ) (\norm{ \nabla u }_2 + \norm{ \nabla v }_2) dx .
\nonumber
\eea
Apply H\"{o}lder inequality to have,

\bea
J(u) - J(v) & \leq & \frac{1}{(\cU + \beta)^{1/2}}
\left( \int_{\Omega} \norm{ \nabla u - \nabla v }_2^2 dx \right)^{1/2}  
\left( \int_{\Omega} (\norm{ \nabla u }_2 + \norm{ \nabla v }_2)^2 dx \right)^{1/2}
\nonumber\\ 
& \leq & \frac{2}{(\cU + \beta)^{1/2}} \left( \int_{\Omega} \norm{ \nabla u - \nabla v }_2^2 dx \right)^{1/2}
\left( \int_{\Omega} \norm{ \nabla u }_2^2 + \norm{ \nabla v }_2^2 dx \right)^{1/2} ,
\nonumber
\eea
since $2 a b \leq a^2 + b^2$ for any $a,b \in \R_{+}.$ 
By Corollary \ref{euclidean_grad_squared},
we have already obtained the upper bound for 
the second integral on the right hand side. Then,

\bea
J(u) - J(v) & \leq & \frac{2\kappa^2 \vert \Omega \vert^2}{(\cU + \beta)^{1/2}} 
\left( \int_{\Omega} \norm{ \nabla u - \nabla v }_2^2 dx \right)^{1/2}
\nonumber\\
& = & \frac{2\kappa^2 \vert \Omega \vert^2}{(\cU + \beta)^{1/2}} \norm{\nabla(u-v)}_2 .
\nonumber
\eea
Hence, the positive real valued functional is defined by,

\begin{displaymath}
\cK(u) := \frac{1}{(\cU + \beta)^{1/2}} .
\end{displaymath}

\end{proof}

\noindent An immediate consequence that we make use of
$\cC^{1+}(\Omega)$ function space is formulated below.

\begin{cor}
\label{lipschitz_bound_for_successive_TV}
Over the compact domain $\Omega \subset \R^{3},$
assume that $u, v \in \cC^{1+}(\Omega) \bigcap \cC^{0,\gamma}(\Omega).$
Then with the same functional 
$\cK(u) : \cC^{1+}(\Omega) \rightarrow \R_{+}$ as appears
in Theorem \ref{bound_for_successive_TV}, it is hold that

\bea
J(u) - J(v) \leq 2\kappa^2 L_{u,v} \vert \Omega \vert^2 \cK(u) ,
\nonumber
\eea
for $J(\cdot)$ defined by $J(\cdot) = \int_{\Omega} \sqrt{\norm{\nabla(\cdot)}_2^2 + \beta} dx,$
where $\kappa$ satisfies (\ref{holder_cont}) for $\gamma  = 1.$
\end{cor}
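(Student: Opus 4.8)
The plan is to read the corollary as a direct substitution into Theorem~\ref{bound_for_successive_TV}, in which the residual gradient factor $\norm{\nabla(u-v)}_2$ is replaced by a constant furnished by the strengthened hypothesis $u,v\in\cC^{1+}(\Omega)$. Theorem~\ref{bound_for_successive_TV} already supplies
\[
J(u) - J(v) \leq 2\kappa^2 \vert \Omega \vert^2 \cK(u)\,\norm{\nabla(u-v)}_2 ,
\]
with $\cK(u) = (\cU+\beta)^{-1/2}$ and $\cU = \min_{x\in\Omega}\norm{\nabla u}_2^2$ unchanged; $\cK(u)$ depends only on $u$, so it is untouched by what follows. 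The sole task is therefore to control $\norm{\nabla(u-v)}_2$ by the pair-dependent constant $L_{u,v}$.

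First I would record the embedding $\cC^{1+}(\Omega)\subset\cC^{0,1}(\Omega)$: on the compact (and convex) domain $\Omega$ every $\cC^{1+}$ function is Lipschitz continuous, so the difference $u-v$ is Lipschitz and satisfies the $\gamma=1$ instance of (\ref{holder_cont}) with a modulus that I name $L_{u,v}$. Because $u,v\in\cC^1(\Omega)$, the difference $u-v$ is again $\cC^1(\Omega)$, so $\varphi:=u-v$ meets the hypotheses of Corollary~\ref{euclidean_grad_squared} with $L_{u,v}$ playing the role of $\kappa$.

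I would then apply the gradient estimate of Corollary~\ref{euclidean_grad_squared} (equivalently Theorem~\ref{holder_embedded_into_TV}) to $u-v$ to bound $\norm{\nabla(u-v)}_2$ in terms of $L_{u,v}$, and substitute the result into the displayed inequality above. This collapses the right-hand side to $2\kappa^2 L_{u,v}\vert\Omega\vert^2\cK(u)$, the claimed estimate.

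The step I expect to be the main obstacle is the bookkeeping of the volume factors $\vert\Omega\vert$ incurred in passing from the Lipschitz modulus of $u-v$ to $\norm{\nabla(u-v)}_2$: Corollary~\ref{euclidean_grad_squared} returns $\int_\Omega\norm{\nabla(u-v)}_2^2\,dx\leq L_{u,v}^2\vert\Omega\vert^2$, i.e.\ $\norm{\nabla(u-v)}_2\leq L_{u,v}\vert\Omega\vert$, which carries one more power of $\vert\Omega\vert$ than the stated constant $\vert\Omega\vert^2$ allows. To match the corollary exactly I must therefore fix the convention that $L_{u,v}$ denotes a direct bound on the $\cL^2$-quantity $\norm{\nabla(u-v)}_2$ itself, rather than the pointwise Lipschitz constant of $u-v$; the $\cC^{1+}$ hypothesis guarantees this quantity is finite and constant in the pair $u,v$. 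Settling this identification cleanly, while confirming that $\cK(u)$ is genuinely independent of $v$, is the only delicate point, the remainder being the routine substitution described above.
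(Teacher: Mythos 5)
Your proof and the paper's diverge on what $L_{u,v}$ means, and the divergence is substantive. The paper's own (one-line) proof takes $L_{u,v}$ to be a constant satisfying $\norm{\nabla(u-v)}_2 \leq L_{u,v}\norm{u-v}_2$ and concludes
\[
J(u)-J(v) \;\leq\; 2\kappa^2\vert\Omega\vert^2\,\cK(u)\,L_{u,v}\,\norm{u-v}_2 ,
\]
i.e.\ with an extra factor $\norm{u-v}_2$ that is absent from the displayed statement of the corollary. That this factor is intended (the display is a typo) is confirmed by every downstream use: the chain of inequalities in Theorem \ref{discrepancy_for_smoothed_TV} and the Bregman estimate in Theorem \ref{thrm_tv_convergence} both carry $\norm{\varphi_{\alpha(\delta)}-\varphi^{\dagger}}_2$, and the convergence argument needs it because one divides by that norm after invoking $2$-convexity. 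You instead read the statement literally and try to bound $\norm{\nabla(u-v)}_2$ by an absolute pair-dependent constant.

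That route has two defects. First, as you yourself note, applying Corollary \ref{euclidean_grad_squared} to $u-v$ yields $\norm{\nabla(u-v)}_{\cL^2}\leq L_{u,v}\vert\Omega\vert$ and hence a total of $\vert\Omega\vert^3$, not $\vert\Omega\vert^2$; your repair is to \emph{define} $L_{u,v}$ to be a bound on $\norm{\nabla(u-v)}_2$ itself. That makes the corollary true by fiat but contentless --- it is then Theorem \ref{bound_for_successive_TV} with the last factor renamed --- and it breaks the later applications, where it is the decay of $\norm{u-v}_2$, not a fixed constant, that drives convergence. Second, be aware that the paper's version is itself shaky: an inequality $\norm{\nabla(u-v)}_2\leq L_{u,v}\norm{u-v}_2$ with a uniform constant is an inverse (Bernstein-type) estimate that fails on all of $\cC^{1+}(\Omega)$ and is only a tautology if $L_{u,v}$ may depend on the specific pair. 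A faithful reconstruction should state that inequality as an explicit hypothesis on the pair $(u,v)$ and retain the factor $\norm{u-v}_2$ in the conclusion.
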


\begin{proof}

Since $u, v \in \cC^{1+}(\Omega),$ there there exists constant satisfying 
$\norm{\nabla(u-v)}_2 \leq L_{u,v}\norm{u - v}_2.$ 
Then it follows from the above calculations in the proof of
Theorem \ref{bound_for_successive_TV},

\bea
J(u) - J(v) & \leq & \frac{2\kappa^2 \vert \Omega \vert^2}{(\cU + \beta)^{1/2}} \norm{\nabla(u-v)}_2
\nonumber\\
& \leq & \frac{2\kappa^2 \vert \Omega \vert^2}{(\cU + \beta)^{1/2}} L_{u,v}\norm{u - v}_2 .
\nonumber
\eea
\end{proof}


\subsection{Discrepancy principle for the smoothed TV regularizer}
\label{discrepancy_for_TV}

We are able to evaluate the fixed coefficient 
$\tau$ in the discrepancy principle
$\norm{\cT\varphi_{\alpha(\delta)} - f^{\delta}} \leq \tau\delta$ 
for the smoothed TV penalty $J(\cdot)$ in the problem
(\ref{problem}). To do so, we need to assume that the target function
to be class of $\cC^{1+}(\Omega).$

Moreover, in order for a precise upper bound for 
$\norm{\cT\varphi_{\alpha(\delta)} - f^{\delta}},$
we will need to focus  on our specified penalty
$J(\varphi) := \int_{\Omega} \sqrt{\norm{\nabla \varphi}_2^2 + \beta} dx.$ 
The regularized solution $\varphi_{\alpha(\delta)}$
to the problem (\ref{problem}) is the minimum of $F(\varphi)$
for all $\varphi \in \cD(\cT).$ Which is in other words,

\bea
\varphi_{\alpha(\delta)} \in \argmin_{\varphi} 
\left\{ F_{\alpha}(\varphi) = \frac{1}{2} \norm{\cT\varphi - f^{\delta}}_{\cL^2}^2 + \alpha J(\varphi) \right\} .
\nonumber
\eea
Then

\beq
\label{minimizer_true_1}
\frac{1}{2}\norm{\cT \varphi_{\alpha(\delta)} - f^{\delta}}_{\cL^2}^2 + \alpha J(\varphi_{\alpha(\delta)})
\leq \frac{1}{2}\norm{\cT \varphi^{\dagger} - f^{\delta}}_{\cL^2}^2 + \alpha J(\varphi^{\dagger}).
\eeq
Since the true data $f^{\dagger}$ satisfying the operator 
equation $\cT\varphi^{\dagger} = f^{\dagger}$ lies in some
$\delta-$ball $\cB_{\delta}(f^{\delta}),$ {\em i.e.}
$\norm{f^{\dagger} - f^{\delta}} \leq \delta,$
then (\ref{minimizer_true_1}) reads,

\beq
\label{minimizer_true_2}
\frac{1}{2}\norm{\cT \varphi_{\alpha(\delta)} - f^{\delta}}_{\cL^2}^2 \leq 
\frac{1}{2}\delta^{2} + \alpha (J(\varphi^{\dagger}) - J(\varphi_{\alpha(\delta)})) .
\eeq
Further development of this estimation will be done by means of
Theorem \ref{holder_embedded_into_TV} as formulated below.

\begin{theorem}[Discrepancy principle for the smoothed-TV regularization]
\label{discrepancy_for_smoothed_TV}
Over the compact domain $\Omega \subset \R^{3},$ denote by 
$\varphi_{\alpha(\delta)}, \varphi^{\dagger} \in \cC^{1+}(\Omega) \bigcap \cC^{0,\gamma}(\Omega)$ 
the regularized and the true solutions to the problem 
(\ref{problem}) respectively.
If the regularization parameter $\alpha(\delta)$ 
is chosen according to the rule of,

\beq
\label{regpar_tv_discrepancy1}
\alpha(\delta) \leq \delta^2 \frac{\left(\cK(\varphi_{\alpha(\delta)})\right)^{-1}}{2 \kappa^2 \vert\Omega\vert^2} ,
\eeq
for $\kappa$ satisfying (\ref{holder_cont}) with $\gamma = 1,$ and

\begin{displaymath}
\cK(\varphi_{\alpha(\delta)}) := \frac{1}{(\cU + \beta)^{1/2}}
\end{displaymath}
where

\begin{displaymath}
\cU = \min_{x \in \Omega} \{ \norm{ \nabla \varphi_{\alpha(\delta)} }_2^2 \},
\end{displaymath} 
then the discrepancy $\norm{\cT \varphi_{\alpha(\delta)} - f^{\delta}}_{\cL^2}$
for the smoothed-TV regularization is estimated by,

\beq
\label{discrepancy_for_smoothed_TV_est1}
\norm{\cT \varphi_{\alpha(\delta)} - f^{\delta}}_{\cL^2}  \leq 
\delta\sqrt{1 + \norm{\nabla(\varphi_{\alpha(\delta)} - \varphi^{\dagger})}_2} .
\eeq
Furthermore, if the regularization parameter fulfils

\beq
\label{regpar_tv_discrepancy2}
\alpha(\delta) \leq \delta^2 \frac{\left(\cK(\varphi_{\alpha(\delta)})\right)^{-1}}
{2 \kappa^2 \vert\Omega\vert^2 L_{\varphi_{\alpha(\delta)} , \varphi^{\dagger}}},
\eeq
where $L_{\varphi_{\alpha(\delta)} , \varphi^{\dagger}}$ 
is an appropriate Lipschitz costant then,

\beq
\label{discrepancy_for_smoothed_TV_est2}
\norm{\cT \varphi_{\alpha(\delta)} - f^{\delta}}_{\cL^2}  \leq 
\delta\sqrt{1 + \norm{\varphi_{\alpha(\delta)} - \varphi^{\dagger}}_2} .
\eeq
\end{theorem}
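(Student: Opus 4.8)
The plan is to start from the variational inequality (\ref{minimizer_true_2}) already in hand,
\[ \frac{1}{2}\norm{\cT \varphi_{\alpha(\delta)} - f^{\delta}}_{\cL^2}^2 \leq \frac{1}{2}\delta^{2} + \alpha(\delta)\bigl( J(\varphi^{\dagger}) - J(\varphi_{\alpha(\delta)}) \bigr) , \]
and to control the penalty difference $J(\varphi^{\dagger}) - J(\varphi_{\alpha(\delta)})$ by invoking Theorem \ref{bound_for_successive_TV}. Taking $u = \varphi^{\dagger}$ and $v = \varphi_{\alpha(\delta)}$ there (both lie in $\cC^{1+}(\Omega)\cap\cC^{0,\gamma}(\Omega)$ by hypothesis), that theorem supplies
\[ J(\varphi^{\dagger}) - J(\varphi_{\alpha(\delta)}) \leq 2\kappa^2 \vert\Omega\vert^2 \cK(\varphi_{\alpha(\delta)}) \norm{\nabla(\varphi_{\alpha(\delta)} - \varphi^{\dagger})}_2 , \]
where $\cK(\varphi_{\alpha(\delta)}) = (\cU + \beta)^{-1/2}$ with $\cU = \min_{x\in\Omega}\norm{\nabla\varphi_{\alpha(\delta)}}_2^2$, precisely the functional named in the statement. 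I would note in passing that if $J(\varphi^{\dagger}) - J(\varphi_{\alpha(\delta)}) \leq 0$ the desired bound is immediate, since then the discrepancy is already at most $\delta$.

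The second step is to absorb the constant through the parameter rule. Multiplying the penalty bound by $\alpha(\delta)$ and then inserting the choice (\ref{regpar_tv_discrepancy1}), which is engineered so that $\alpha(\delta)\cdot 2\kappa^2\vert\Omega\vert^2\cK(\varphi_{\alpha(\delta)}) \leq \delta^2$, yields
\[ \alpha(\delta)\bigl( J(\varphi^{\dagger}) - J(\varphi_{\alpha(\delta)}) \bigr) \leq \delta^2 \norm{\nabla(\varphi_{\alpha(\delta)} - \varphi^{\dagger})}_2 . \]
Feeding this back into (\ref{minimizer_true_2}) and clearing the factor $\tfrac12$ turns the inequality into one of the form $\norm{\cT\varphi_{\alpha(\delta)} - f^{\delta}}_{\cL^2}^2 \leq \delta^2\bigl(1 + \norm{\nabla(\varphi_{\alpha(\delta)} - \varphi^{\dagger})}_2\bigr)$, and taking square roots delivers (\ref{discrepancy_for_smoothed_TV_est1}).

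For the second estimate (\ref{discrepancy_for_smoothed_TV_est2}) I would run the same argument but replace Theorem \ref{bound_for_successive_TV} by Corollary \ref{lipschitz_bound_for_successive_TV}. Since $\varphi^{\dagger},\varphi_{\alpha(\delta)}\in\cC^{1+}(\Omega)$, there is a constant $L_{\varphi_{\alpha(\delta)},\varphi^{\dagger}}$ with $\norm{\nabla(\varphi_{\alpha(\delta)}-\varphi^{\dagger})}_2 \leq L_{\varphi_{\alpha(\delta)},\varphi^{\dagger}}\norm{\varphi_{\alpha(\delta)}-\varphi^{\dagger}}_2$, so the corollary gives the same bound with $\norm{\nabla(\cdot)}_2$ replaced by $L_{\varphi_{\alpha(\delta)},\varphi^{\dagger}}\norm{\cdot}_2$. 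The modified rule (\ref{regpar_tv_discrepancy2}) carries exactly the extra factor $L_{\varphi_{\alpha(\delta)},\varphi^{\dagger}}$ in its denominator, so that $\alpha(\delta)\cdot 2\kappa^2\vert\Omega\vert^2 L_{\varphi_{\alpha(\delta)},\varphi^{\dagger}}\cK(\varphi_{\alpha(\delta)}) \leq \delta^2$ and the Lipschitz constant cancels; the same square-root step then produces (\ref{discrepancy_for_smoothed_TV_est2}).

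The main obstacle, and the point demanding the most care, is the bookkeeping around $\cK$ and the numerical constant. Theorem \ref{bound_for_successive_TV} attaches $\cK$ to one of its two arguments, whereas the quantity to be bounded is $J(\varphi^{\dagger}) - J(\varphi_{\alpha(\delta)})$ while the rule uses $\cK(\varphi_{\alpha(\delta)})$; one must verify that the denominator retained in the proof of Theorem \ref{bound_for_successive_TV} is the one associated with $\varphi_{\alpha(\delta)}$, so that $\cU = \min_{x}\norm{\nabla\varphi_{\alpha(\delta)}}_2^2$ is indeed the correct minimizing quantity. The remaining task is purely mechanical: tracking the factor $\tfrac12$ in (\ref{minimizer_true_2}) through to the constant sitting inside the square root, and fixing the numerical constant in (\ref{regpar_tv_discrepancy1})--(\ref{regpar_tv_discrepancy2}) accordingly.
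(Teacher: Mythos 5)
Your proposal follows the paper's own proof essentially verbatim: start from the minimality inequality (\ref{minimizer_true_2}), bound the penalty difference via Theorem \ref{bound_for_successive_TV} (resp.\ Corollary \ref{lipschitz_bound_for_successive_TV}), absorb the constant through the parameter rules (\ref{regpar_tv_discrepancy1}) and (\ref{regpar_tv_discrepancy2}), and take square roots. The bookkeeping points you flag---which of the two arguments $\cK$ is attached to in Theorem \ref{bound_for_successive_TV}, the sign of the penalty difference coming out of (\ref{minimizer_true_2}), and the residual factor of $2$ between the lemma's constant and the rule (\ref{regpar_tv_discrepancy1})---are genuine and are in fact glossed over in the paper's version, so your added care only improves on it.
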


\begin{proof}

From  the calculations in (\ref{minimizer_true_2}) and the quick adaptation of 
Theorem \ref{bound_for_successive_TV}, it is firstly obtained
that

\bea
\label{discrepency_smoothed_TV1}
\norm{\cT \varphi_{\alpha(\delta)} - f^{\delta}}_{\cL^2}^2 & \leq &
\delta^{2} + 2 \alpha (J(\varphi_{\alpha(\delta)}) - J(\varphi^{\dagger}))
\nonumber\\
& \leq & \delta^{2} + 2 \alpha \kappa^{2}\vert\Omega\vert^2 \cK(\varphi_{\alpha(\delta)})
\norm{\nabla(\varphi_{\alpha(\delta)} - \varphi^{\dagger})}_2 .
\eea
Then, with the given rule of the regularization parameter $\alpha$
in (\ref{regpar_tv_discrepancy1}), 

\bea
\norm{\cT \varphi_{\alpha(\delta)} - f^{\delta}}_{\cL^2}^2 \leq 
\delta^2 + \delta^2\norm{\nabla(\varphi_{\alpha(\delta)} - \varphi^{\dagger})}_2 ,
\nonumber
\eea
which is the first result. It is not difficult to obtain
the second part of the theorem. Analogous to 
Corollary \ref{lipschitz_bound_for_successive_TV}, observe that
there exists $L_{\varphi_{\alpha(\delta)} , \varphi^{\dagger}}$
such that

\bea
J(\varphi_{\alpha(\delta)}) - J(\varphi^{\dagger}) & \leq & 
\kappa^{2}\vert\Omega\vert^2 \cK(\varphi_{\alpha(\delta)})
\norm{\nabla(\varphi_{\alpha(\delta)} - \varphi^{\dagger})}_2
\nonumber\\
& \leq & \kappa^{2}\vert\Omega\vert^2 \cK(\varphi_{\alpha(\delta)})
L_{\varphi_{\alpha(\delta)} , \varphi^{\dagger}}
\norm{\varphi_{\alpha(\delta)} - \varphi^{\dagger}}_2 
\noindent
\eea
Then, from (\ref{discrepency_smoothed_TV1}),

\bea
\norm{\cT \varphi_{\alpha(\delta)} - f^{\delta}}_{\cL^2}^2 \leq
\delta^{2} + 2 \alpha \kappa^{2}\vert\Omega\vert^2 \cK(\varphi_{\alpha(\delta)})
L_{\varphi_{\alpha(\delta)} , \varphi^{\dagger}} 
\norm{\varphi_{\alpha(\delta)} - \varphi^{\dagger}}_2 .
\eea
Hence, with the given rule for choice of the regularization parameter $\alpha$
in (\ref{regpar_tv_discrepancy2}), the second desired result yields.
\end{proof}

%

Uniform continuity of the smoothed-TV regularization will come from
formulating another useful Bregman divergence which will lead us to the 
ultimate result of this work.
Before proceeding, it must be noted that the discrepancy principle for the smoothed
TV is yet to be completed in (\ref{discrepancy_for_smoothed_TV_est2})
which is necessary in order for fulfilling the condition in (\ref{discrepancy_pr_definition}).
The completion will follow after quantifying the rate for 
$\norm{\varphi_{\alpha(\delta)} - \varphi^{\dagger}}_2.$

\begin{theorem}
\label{thrm_tv_convergence}
Under the assumptions of Theorem \ref{discrepancy_for_smoothed_TV},
if the regularization parameter, for $\delta \in (0,1),$ 
satisfies

\bea
\label{regpar_tv_discrepancy3}
\alpha(\delta) \leq \delta \frac{\left(\cK(\varphi_{\alpha(\delta)})\right)^{-1}}
{2 \kappa^2 \vert\Omega\vert^2 L_{\varphi_{\alpha(\delta)} , \varphi^{\dagger}}},
\nonumber
\eea
which is analogous to (\ref{regpar_tv_discrepancy2})
in Theorem \ref{discrepancy_for_smoothed_TV}, then

\begin{displaymath}
\norm{\varphi_{\alpha(\delta)} - \varphi^{\dagger}}_2 \rightarrow 0, \mbox{ as $\alpha(\delta) \rightarrow 0$ whilst $\delta \rightarrow 0$ }. 
\end{displaymath}

\end{theorem}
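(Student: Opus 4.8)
The plan is to reduce the desired norm convergence to the vanishing of a Bregman divergence, exactly as foreshadowed, and then to invoke the total (equivalently, $2$-) convexity of the penalty. Since Theorem~\ref{smoothed_TV_strongly_convex} guarantees that $J$ is strongly convex, Definition~\ref{total_convexity} together with Proposition~\ref{proposition_q-convexity} furnishes the implication $D_{J}(\varphi_{\alpha(\delta)},\varphi^{\dagger}) \to 0 \Rightarrow \norm{\varphi_{\alpha(\delta)} - \varphi^{\dagger}}_2 \to 0$ and, quantitatively, the $2$-convexity lower bound $D_{J}(\varphi_{\alpha(\delta)},\varphi^{\dagger}) \geq c\,\norm{\varphi_{\alpha(\delta)} - \varphi^{\dagger}}_2^{2}$ for the modulus $c>0$. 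Hence it suffices to show that $D_{J}(\varphi_{\alpha(\delta)},\varphi^{\dagger})$ tends to zero along the regularization path as $\delta \to 0$ and $\alpha(\delta)\to 0$.

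First I would extract the standard a priori estimates from the minimizing inequality (\ref{minimizer_true_2}). Writing $r := f^{\delta} - \cT\varphi_{\alpha(\delta)}$, that inequality yields both the value bound $J(\varphi_{\alpha(\delta)}) \leq J(\varphi^{\dagger}) + \delta^{2}/(2\alpha)$ and the residual bound $\norm{r}^{2} \leq \delta^{2} + 2\alpha\,(J(\varphi^{\dagger}) - J(\varphi_{\alpha(\delta)}))$. Because the integrand of $J$ is bounded below by $\sqrt{\beta}$, one has $J(\varphi_{\alpha(\delta)}) \geq \sqrt{\beta}\,\vert\Omega\vert$, so $J(\varphi^{\dagger}) - J(\varphi_{\alpha(\delta)})$ is bounded above by a fixed constant; the residual bound then forces $\norm{r} \to 0$, hence $\cT\varphi_{\alpha(\delta)} \to f^{\dagger}$, as $\delta,\alpha \to 0$. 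The value bound, read together with the parameter rule assumed in the theorem (a discrepancy-type rule that keeps $\alpha(\delta)$ comparable to $\delta$, since it is chosen as the supremal admissible value, whence $\delta^{2}/\alpha(\delta) \to 0$), shows simultaneously that $\limsup J(\varphi_{\alpha(\delta)}) \leq J(\varphi^{\dagger})$ and that the family $\{\varphi_{\alpha(\delta)}\}$ has uniformly bounded penalty.

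Next I would pass to the limit. Boundedness yields a weakly convergent subsequence $\varphi_{\alpha(\delta)} \rightharpoonup \bar\varphi$; since $\cT$ is compact it is weak-to-strong continuous, so $\cT\varphi_{\alpha(\delta)} \to \cT\bar\varphi$, and combined with $\cT\varphi_{\alpha(\delta)} \to f^{\dagger}$ this gives $\cT\bar\varphi = f^{\dagger}$. Injectivity of $\cT$ then identifies the limit as $\bar\varphi = \varphi^{\dagger}$. Weak lower semicontinuity of $J$ gives $J(\varphi^{\dagger}) \leq \liminf J(\varphi_{\alpha(\delta)})$, which with the earlier $\limsup$ estimate upgrades to $J(\varphi_{\alpha(\delta)}) \to J(\varphi^{\dagger})$. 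Assembling the Bregman divergence $D_{J}(\varphi_{\alpha(\delta)},\varphi^{\dagger}) = J(\varphi_{\alpha(\delta)}) - J(\varphi^{\dagger}) - \scal{\nabla J(\varphi^{\dagger})}{\varphi_{\alpha(\delta)} - \varphi^{\dagger}}$, the first difference tends to $0$ by the value convergence while the inner product tends to $0$ because $\varphi_{\alpha(\delta)} \rightharpoonup \varphi^{\dagger}$ and $\nabla J(\varphi^{\dagger})$ is a fixed element. Thus $D_{J}(\varphi_{\alpha(\delta)},\varphi^{\dagger}) \to 0$, and the reduction in the first paragraph closes the argument.

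The main obstacle, and the place needing the most care, is the coercivity/compactness step: deducing that a uniform bound on $J(\varphi_{\alpha(\delta)})$ actually bounds $\{\varphi_{\alpha(\delta)}\}$ in the ambient space and furnishes a weak limit. The penalty $J$ controls only the gradient (bounded-variation) part of the function, so closing this requires the embedding of the H\"{o}lder/BV class into $\cL^{1}$ (Proposition~\ref{holder_embedded_into_L1}) to recover control of the full norm. A second delicate point is the implicit nature of the parameter rule: $\alpha(\delta)$, $\cK(\varphi_{\alpha(\delta)})$ and $L_{\varphi_{\alpha(\delta)},\varphi^{\dagger}}$ all depend on the unknown regularized solution, so one must argue that these stay uniformly controlled along the path and that the rule genuinely enforces both $\alpha(\delta)\to 0$ and $\delta^{2}/\alpha(\delta)\to 0$ at once. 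Indeed, a direct quadratic estimate through the strong convexity of $F_{\alpha}$ alone does \emph{not} deliver vanishing, because with $\alpha(\delta)$ of order $\delta$ the resulting bound on $\norm{\varphi_{\alpha(\delta)} - \varphi^{\dagger}}_2$ stalls at a nonzero constant; this is precisely why the Bregman/total-convexity detour is indispensable.
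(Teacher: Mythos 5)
Your proposal takes a genuinely different route from the paper's. The paper argues directly and quantitatively: it forms the Bregman divergence $D_{\alpha J}(\varphi_{\alpha(\delta)},\varphi^{\dagger})$, replaces $\alpha\nabla J(\varphi^{\dagger})$ by $\cT^{\ast}(f^{\delta}-\cT\varphi^{\dagger})$ on the grounds that the first-order optimality condition (\ref{optimality_1}) ``must also hold'' at the true solution, bounds $J(\varphi_{\alpha(\delta)})-J(\varphi^{\dagger})$ via the Lipschitz estimate of Corollary \ref{lipschitz_bound_for_successive_TV} and the pairing term by $\delta\norm{\cT^{\ast}}\,\norm{\varphi_{\alpha(\delta)}-\varphi^{\dagger}}_2$, and then divides out one power of the norm using $2$-convexity to land on the explicit rate $\norm{\varphi_{\alpha(\delta)}-\varphi^{\dagger}}_2\leq\delta(\frac{1}{2}+\norm{\cT^{\ast}})$. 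You instead run the classical variational argument: a priori bounds from the minimizing inequality, weak subsequential limits, compactness and injectivity of $\cT$ to identify the limit, weak lower semicontinuity to upgrade to $J(\varphi_{\alpha(\delta)})\to J(\varphi^{\dagger})$, hence $D_{J}\to 0$, hence norm convergence by total convexity. What your route buys: you never need the optimality condition to hold at $\varphi^{\dagger}$ --- a source-condition-type hypothesis the paper invokes without justification --- nor the constant $L_{\varphi_{\alpha(\delta)},\varphi^{\dagger}}$. What it costs: no explicit rate, and two gaps you only partly flag. First, coercivity: a uniform bound on $J$ controls only the total variation, and Proposition \ref{holder_embedded_into_L1} bounds $\norm{\varphi}_{\cL^1}$ by $\norm{\varphi}_{\infty}$, not by $TV(\varphi)$, so weak sequential compactness of the family $\{\varphi_{\alpha(\delta)}\}$ in $\cH$ is not actually secured without an additional Poincar\'e-type or uniform-norm argument. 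Second, the stated rule is only an upper bound on $\alpha(\delta)$, so $\delta^{2}/\alpha(\delta)\to 0$ must be imposed separately rather than inferred. Finally, your closing remark is more than an aside: since $D_{\alpha J}=\alpha D_{J}\geq \alpha c\,\norm{\varphi_{\alpha(\delta)}-\varphi^{\dagger}}^{2}$, dividing the paper's upper bound by the \emph{correct} lower bound yields $\norm{\varphi_{\alpha(\delta)}-\varphi^{\dagger}}_2\leq\delta(1+\norm{\cT^{\ast}})/(\alpha c)$, which with $\alpha(\delta)$ of order $\delta$ does not vanish; the paper obtains its rate only by silently dropping the factor $\alpha$ from the $2$-convexity bound. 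Your weak-convergence detour is exactly the kind of argument needed to repair that defect, provided you close the coercivity step.
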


\begin{proof}
Since we will prove the assertion depending on the choice
of the regularization parameter, then we formulate
another Bregman divergence associated with the functional
$\alpha J(\cdot).$ Moreover, it is clear that
first order optimality conditions in (\ref{optimality_1})
must also be hold for the true solution $\varphi^{\dagger}.$
Then,

\bea
D_{\alpha J}(\varphi_{\alpha(\delta)} , \varphi^{\dagger})
& = & \alpha J(\varphi_{\alpha(\delta)}) - \alpha J(\varphi^{\dagger})
- \langle \alpha \nabla J(\varphi^{\dagger}), \varphi_{\alpha(\delta)} - \varphi^{\dagger} \rangle
\nonumber\\
& = & \alpha \left( J(\varphi_{\alpha(\delta)}) - J(\varphi^{\dagger}) \right) - 
\langle \cT^{\ast}(f^{\delta} - \cT\varphi^{\dagger}) , \varphi_{\alpha(\delta)} - \varphi^{\dagger} \rangle .
\eea
Again, from  the calculations in (\ref{minimizer_true_2}) and by
Theorem \ref{bound_for_successive_TV}, there exists Lipschitz
constant $L_{\varphi_{\alpha(\delta)} , \varphi^{\dagger}}$
such that

\bea
D_{\alpha J}(\varphi_{\alpha(\delta)} , \varphi^{\dagger})
\leq \alpha \kappa^{2}\vert\Omega\vert^2 \cK(\varphi_{\alpha(\delta)})
L_{\varphi_{\alpha(\delta)} , \varphi^{\dagger}}
\norm{\varphi_{\alpha(\delta)} - \varphi^{\dagger}}_2 + 
\delta \norm{\cT} \norm{\varphi_{\alpha(\delta)} - \varphi^{\dagger}}_2 .
\nonumber
\eea
By the given choice of regularization parameter rule (\ref{regpar_tv_discrepancy3}),
it is concluded that

\bea
D_{\alpha J}(\varphi_{\alpha(\delta)} , \varphi^{\dagger}) \leq \delta \norm{\varphi_{\alpha(\delta)} - \varphi^{\dagger}}_2
+ \delta \norm{\cT^{\ast}} \norm{\varphi_{\alpha(\delta)} - \varphi^{\dagger}}_2 .
\eea
Hence, by $2-$convexity in (\ref{q_convexity}) and strong convexity of
the smoothed TV penalty (see Thrm \ref{smoothed_TV_strongly_convex}),

\beq
\norm{\varphi_{\alpha(\delta)} - \varphi^{\dagger}}_2 \leq 
\delta \left(\frac{1}{2} + \norm{\cT^{\ast}} \right) \rightarrow 0, \mbox{ as $\alpha(\delta) \rightarrow 0$ whilst $\delta \rightarrow 0$ } .
\eeq

\end{proof}

\begin{cor}[Final discrepancy estimation for the smoothed-TV regularization]
\label{final_discrepancy}
Under the same assumptions of Theorem \ref{thrm_tv_convergence},
if the regularization parameter is chosen with respect to the rule

\bea
\label{regpar_tv_discrepancy4}
\alpha(\delta) \leq \delta \frac{\left(\cK(\varphi_{\alpha(\delta)})\right)^{-1}}
{2 \kappa^2 \vert\Omega\vert^2 L_{\varphi_{\alpha(\delta)} , \varphi^{\dagger}}},
\eea
for $\delta \in (0,1)$ sufficiently small,
then by (\ref{regpar_tv_discrepancy2}),

\bea
\norm{\cT \varphi_{\alpha(\delta)} - f^{\delta}}_{\cL^2} & \leq &
\delta \left(\frac{3}{2} + \norm{\cT^{\ast}} \right)^{1/2} .
\eea

\end{cor}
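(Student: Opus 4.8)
The plan is to combine, in essentially one line of algebra, the two estimates already secured in this section: the discrepancy bound (\ref{discrepancy_for_smoothed_TV_est2}) of Theorem \ref{discrepancy_for_smoothed_TV} and the norm-convergence bound established in Theorem \ref{thrm_tv_convergence}. First I would invoke (\ref{discrepancy_for_smoothed_TV_est2}), which gives $\norm{\cT \varphi_{\alpha(\delta)} - f^{\delta}}_{\cL^2} \leq \delta \sqrt{1 + \norm{\varphi_{\alpha(\delta)} - \varphi^{\dagger}}_2}$, and then substitute the explicit rate $\norm{\varphi_{\alpha(\delta)} - \varphi^{\dagger}}_2 \leq \delta\left( \frac{1}{2} + \norm{\cT^{\ast}} \right)$ read off from the final display in the proof of Theorem \ref{thrm_tv_convergence}. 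This turns the radicand into $1 + \delta\left(\frac{1}{2} + \norm{\cT^{\ast}}\right)$.

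Next I would use the standing hypothesis $\delta \in (0,1)$ to absorb the surplus factor: since $\delta < 1$ one has $\delta\left(\frac{1}{2} + \norm{\cT^{\ast}}\right) \leq \frac{1}{2} + \norm{\cT^{\ast}}$, whence by monotonicity of $t \mapsto \sqrt{t}$,
\begin{displaymath}
\norm{\cT \varphi_{\alpha(\delta)} - f^{\delta}}_{\cL^2} \leq \delta \sqrt{1 + \frac{1}{2} + \norm{\cT^{\ast}}} = \delta \left( \frac{3}{2} + \norm{\cT^{\ast}} \right)^{1/2},
\end{displaymath}
which is precisely the asserted estimate. No further compactness, convexity, or H\"{o}lder machinery is needed beyond what the two cited results already package; the step is purely a substitution followed by a crude scalar bound.

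The one point that genuinely requires care — and the place where I expect the real work to sit — is the compatibility of the regularization-parameter rules. The estimate (\ref{discrepancy_for_smoothed_TV_est2}) was produced under rule (\ref{regpar_tv_discrepancy2}), whose numerator scales like $\delta^2$, whereas the present corollary prescribes rule (\ref{regpar_tv_discrepancy4}), whose numerator scales like $\delta$; for $\delta \in (0,1)$ these do not define the same admissible set of $\alpha$, so citing (\ref{discrepancy_for_smoothed_TV_est2}) as a black box while simultaneously invoking the $\delta$-rule is not internally consistent. The clean resolution is to re-run the short chain (\ref{discrepency_smoothed_TV1}) directly under the $\delta$-rule: there $2\alpha \kappa^2 \vert\Omega\vert^2 \cK(\varphi_{\alpha(\delta)}) L_{\varphi_{\alpha(\delta)},\varphi^{\dagger}} \leq \delta$, so that $\norm{\cT \varphi_{\alpha(\delta)} - f^{\delta}}_{\cL^2}^2 \leq \delta^2 + \delta \norm{\varphi_{\alpha(\delta)} - \varphi^{\dagger}}_2$. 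Feeding in the same rate then gives $\delta^2 + \delta^2\left(\frac{1}{2} + \norm{\cT^{\ast}}\right) = \delta^2\left(\frac{3}{2} + \norm{\cT^{\ast}}\right)$, which reproduces the claim with the sharp constant and without even needing the slack $\delta < 1$. I would write the proof in this second form, since it keeps the parameter choice consistent throughout and makes transparent that the constant $3/2$ is the sum of the $1$ coming from $\frac{1}{2}\delta^2$ in (\ref{minimizer_true_2}) and the $\frac{1}{2}$ coming from the convergence rate.
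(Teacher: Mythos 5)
Your proof is correct, and the second form you commit to---re-running the chain from (\ref{discrepency_smoothed_TV1}) under the $\delta$-rule to get $\norm{\cT \varphi_{\alpha(\delta)} - f^{\delta}}_{\cL^2}^2 \leq \delta^2 + \delta\norm{\varphi_{\alpha(\delta)} - \varphi^{\dagger}}_2$ and then inserting the rate $\delta\left(\frac{1}{2} + \norm{\cT^{\ast}}\right)$---is exactly the paper's own proof. Your observation that citing (\ref{discrepancy_for_smoothed_TV_est2}) as a black box is inconsistent with the $\delta$-scaling of rule (\ref{regpar_tv_discrepancy4}) is a fair criticism of the corollary's phrase ``then by (\ref{regpar_tv_discrepancy2})''; the paper's actual argument sidesteps this in the same way you do.
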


\begin{proof}
In the proof of Theorem \ref{thrm_tv_convergence},
we have observed

\bea
\norm{\cT \varphi_{\alpha(\delta)} - f^{\delta}}_{\cL^2}^2 \leq
\delta^{2} + 2 \alpha \kappa^{2}\vert\Omega\vert^2 \cK(\varphi_{\alpha(\delta)})
L_{\varphi_{\alpha(\delta)} , \varphi^{\dagger}} \norm{\varphi_{\alpha(\delta)} - \varphi^{\dagger}}_2 .
\eea
After plugging (\ref{regpar_tv_discrepancy4}) in,

\bea
\norm{\cT \varphi_{\alpha(\delta)} - f^{\delta}}_{\cL^2}^2 \leq
\delta^{2} + \delta \norm{\varphi_{\alpha(\delta)} - \varphi^{\dagger}}_2 .
\eea
Recall the rate of the convergence for $\norm{\varphi_{\alpha(\delta)} - \varphi^{\dagger}}_2$
when the choice of regularization parameter fulfils the condition 
(\ref{regpar_tv_discrepancy4}), which is

\bea
\norm{\cT \varphi_{\alpha(\delta)} - f^{\delta}}_{\cL^2}^2 \leq
\delta^{2} + \delta^{2} \left(\frac{1}{2} + \norm{\cT^{\ast}} \right) = 
\delta^2 \left(\frac{3}{2} + \norm{\cT^{\ast}} \right).
\nonumber
\eea
This yields the result after taking the square root of both sides.

\end{proof}

\section*{Conclusion and Further Discussion}

H\"{o}lder continuous functions in the application arises in the field of
scattering theory, see for details \cite[Section 8.2]{ColtonKress13}
and a recent work \cite[Lemma 2.2]{HuSaloVesalainen15}.
In this work, we have explored the regularity properties of 
the smoothed-TV regularization for such functions.
The scientific reason why smoothed-TV regularization has been chosen
for such a study has been established in Theorem \ref{Holder_bounded_by_TV}.

In Theorem \ref{holder_embedded_into_TV}, we have proved that
$TV$ of a function $\varphi$ can be bounded by its Lipschitz
constant $\kappa$ which is the case of $\gamma = 1.$
However, it is still an open question to be able to show that

\begin{displaymath}
TV(\varphi) \leq C(\Omega) [\varphi]_{\cC^{0,\gamma}(\Omega)}
\end{displaymath}
where $C(\Omega)$ is a constant depending on the compact
domain $\Omega \subset \R^{3}$ and $\gamma = 1/4$ due to
Morrey's inequality, see Theorem \ref{Morrey_ineq}.
Then a new compact embedding theorem between the spaces $BV(\Omega)$
and $\cC^{0,\gamma}(\Omega)$ can be established.
On the other hand, compact embedding amongst the H\"{o}lder spaces with the different
orders has already been proven, \cite[Theorem 3.2]{ColtonKress13}.

Speaking about proving that smoothed TV regularization is another
admissible regularization strategy, we have intentionally taken
into account that the forward operator is compact. The reason
behind that can be explained as follows; Application and analysis of 
the method has been widely carried
out in the communities of inverse problem and optimization, 
\cite{AcarVogel94, BachmayrBurger09, BardsleyLuttman09, ChambolleLions97,
ChanChen06, ChanGolubMulet99, DobsonScherzer96, 
DobsonVogel97, VogelOman96}. It is well-known that the efficient result
of TV (or smoothed TV) regularization usually comes from image
processing where the compact operator is mostly considered
to be identity operator, {\em i.e.,} $\cT = \cI.$ Lagged diffusivity fixed
point iteration is the easiest algorithm in order to approximate
the solution for the problem (\ref{problem}), 
\cite{ChanGolubMulet99, Vogel02, VogelOman96}. The convergence
of this algorithm has been shown only for the case of $\cT = \cI,$
\cite{Aujol09, ChanGolubMulet99}. Following the same steps
in the regarding works, we also define the following continuous 
nonlinear transformation

\beq
\cP(\varphi_{\varphi_{\alpha(\delta)}}) := \left(-\alpha(\delta) \nabla^{\ast} \cdot \left( \frac{\nabla}
{( \beta + \vert \nabla \varphi_{\alpha(\delta)} \vert^2)^{1/2}} \right) + \cT^{\ast}\cT \right) .
\eeq
According to the regarding works, the algorithm is convergent in the condition of 
$\lambda_{\min}(\cP(\varphi_{\varphi_{\alpha(\delta)}})) \geq \sigma(\cT^{\ast}\cT) \geq 1 .$
Obviously, this can not hold for us since our forward operator
$\cT$ is compact. A tomographic application of total variation
regularization with some compact foward operator has been recently studied
in \cite{HauptmannSiltanen14}.

\bigskip
\section*{References}


 \bibliographystyle{alpha}

\begin{thebibliography}{99}
  
  \bibitem{AcarVogel94} R. Acar, C. R. Vogel. {\em Analysis of bounded variation penalty methods
   for ill-posed problems,} Inverse Problems, Vol. 10, No. 6, 1217 - 1229, 1994.
   
   \bibitem{Aujol09} J.F. Aujol. {Some first-order algorithms for total variation based image restoration},
   J. Math. Imaging Vision., 34, no.3, 307-327, 2009.

   \bibitem{BachmayrBurger09} M. Bachmayr and M. Burger. {\em Iterative total variation
  schemes for nonlinear inverse problems,} Inverse Problems, 25, 105004 (26pp), 2009.
  
  \bibitem{BardsleyLuttman09} J. M. Bardsley and A. Luttman. {\em Total variation-penalized
  Poisson liklehood estimation for ill-posed problems,} Adv. Comput. Math., 31:25-59, 2009.
  

  \bibitem{BauschkeCombettes11} H. H. Bauschke, P. L. Combettes. 
  {\em Convex analysis and monotone operator theory in Hilbert spaces,} Springer New York, 2011.

  \bibitem{Bredies09} K. Bredies. {\em A forward-backward splitting algorithm for the minimization 
  of non-smooth convex functionals in Banach space},  Inverse Problems 25, no. 1, 015005, 20 pp, 2009.
  

  \bibitem{BurgerOsher04} M. Burger, S. Osher. {\em Convergence rates of convex variational regularization,}
  Inverse Problems, 20(5), 1411 - 1421, 2004.

  \bibitem{ChambolleLions97} A. Chambolle, P.L. Lioins. {\em Image recovery via total variation minimization
   and related problems,} Numer. Math. 76, 167 - 188, 1997.
  
  \bibitem{ChanChen06} T. F. Chan and K. Chen. {\em An optimization-based multilevel algorithm 
  for total variation image denoising,} Multiscale Model. Simul. 5, no. 2, 615-645, 2006.

  \bibitem{ChanGolubMulet99} T. Chan, G. Golub and P. Mulet. {\em A nonlinear primal-dual method
  for total variation-baes image restoration,} SIAM J. Sci. Comp 20: 1964-1977, 1999.
  
  \bibitem{Ciarlet13} Ciarlet P. G. {\em Linear and nonlinear functional analysis with applications.} 
   Society for Industrial and Applied Mathematics, Philadelphia, PA, 2013.
  
  \bibitem{ColtonKress13} D. Colton and R. Kress. {\em Inverse Acoustic and Electromagnetic 
  Scattering Theory,} Springer Verlag Series in Applied Mathematics Vol. 93, Third Edition 2013.
  

   \bibitem{Ekeland74} I. Ekeland. {\em On the variational principle,} J. Math. Anal. Appl.,
    47, 324 - 353, 1974.

  \bibitem{DobsonScherzer96} D. Dobson, O. Scherzer. {\em Analysis of regularized total variation
  penalty methods for denoising}, Inverse Problems, Vol. 12, No. 5, 601 - 617, 1996.

  \bibitem{DobsonVogel97} D. C. Dobson, C. R. Vogel. {\em Convergence of an iterative method for 
  total variation denoising}, SIAM J. Numer. Anal., Vol. 34, No. 5, 1779 - 1791, 1997.
  
  \bibitem{Engl96} H. W. Engl, M. Hanke, A. Neubauer. {\em Regularization of inverse problems,}
  Math. Appl., 375. Kluwer Academic Publishers Group, Dordrecht, 1996.

  \bibitem{Evans98} L. C. Evans. {\em Partial differential equations,} Graduate Studies in Mathematics, 
   19. American Mathematical Society, Providence, RI, 1998.

    \bibitem{Grasmair10} M. Grasmair. {\em Generalized Bregman distances and convergence rates
    for non-convex regularization methods}, Inverse Problems 26, 11, 115014, 16pp, 2010.
   
   \bibitem{Grasmair13} M. Grasmair. {\em Variational inequalities and higher order convergence rates for 
  Tikhonov regularisation on Banach spaces,} J. Inverse Ill-Posed Probl., 21, 379-394, 2013.
  
  \bibitem{GrasmairHaltmeierScherzer11} M. Grasmair, M. Haltmeier, O. Scherzer.
  {\em Necessary and sufficient conditions for linear convergence of $\l^1$-regularization,}
   Comm. Pure Appl. Math. 64(2), 161-182, 2011.
  
%

  \bibitem{HauptmannSiltanen14} H\"{a}m\"{a}l\"{a}inen K, Harhanen L, Hauptmann A, Kallonen A, Niemi E and Siltanen S,
  Total variation regularization for large-scale X-ray tomography.
  International Journal of Tomography and Simulation, Vol. 25, No.1, 1-25, 2014.
  
  \bibitem{Hintermuller14} M. Hinterm\"{u}ller, C.N. Rautenberg, J. Hahn.
  {\em Functional-analytic and numerical issues in splitting methods for total variation-based
  image reconstruction,} Invers Problems 30, 055014(34pp).
  
  

   \bibitem{HuSaloVesalainen15} G. Hu, M. Salo, E. V. Vesalainen. 
   {\em Shape identification in inverse medium scattering problems with a single far-field pattern,}
   arXiv:1507.07846, 2015.
   
   \bibitem{Kirsch11} A. Kirsch. {\em An introduction to the mathematical theory of inverse problems.}
   Second edition. Applied Mathematical Sciences, 120. Springer, New York, 2011.
   
   \bibitem{Lorenz08} D. A. Lorenz. {\em Convergence rates and source conditions for Tikhonov
  regularization with sparsity constraints,} J. Inv. Ill-Posed Problems, 16, 463-478, 2008.


  \bibitem{Tikhonov63} A. N. Tikhonov. {\em On the solution of ill-posed problems and the method of regularization,} 
   Dokl. Akad. Nauk SSSR, 151, 501-504, 1963.

  \bibitem{TikhonovArsenin77} A. N. Tikhonov, V. Y. Arsenin. 
  {\em Solutions of ill-posed problems.} Translated from the Russian. 
  Preface by translation editor Fritz John. Scripta Series in Mathematics. 
  V. H. Winston \& Sons, Washington, D.C.: John Wiley \& Sons, New York-Toronto, 
  Ont.-London, xiii+258 pp, 1977.

  \bibitem{RudinOsherFatemi92} L. I. Rudin, S. J. Osher, E. Fatemi. 
  {\em Nonlinear total variation based noise removal algorithms,} Physica D, 60, 259-268, 1992.
  
  \bibitem{Setzer11} S. Setzer. {\em Operator splittings, Bregman methods and frame shrinkage
  in image processing.} Int. J. Comput. Vis. 92, 265-80, 2011.

 \bibitem{Vogel02} C. R. Vogel. {\em Computational methods for inverse problems,}
  Frontiers Appl. Math. 23, 2002.

  \bibitem{VogelOman96} C. R. Vogel , M. E. Oman. {\em Iterative methods for total variation
  denoising}, SIAM J. SCI. COMPUT., Vol. 17, No. 1, 227-238, 1996. 


  
     
  \end{thebibliography}

\end{document}